\numberwithin{equation}{section}
\newtheorem{theorem}{Theorem}[section]
\newtheorem{lemma}[theorem]{Lemma}
\newtheorem{corollary}[theorem]{Corollary}
\newtheorem{proposition}[theorem]{Proposition}
\theoremstyle{definition}
\newtheorem{remark}{Remark}[section]
\newtheorem{example}[theorem]{Example}
\newtheorem*{claim}{Claim}
\theoremstyle{remark}
\newtheorem{case}{Case}
\newtheorem{subcase}{Case}
\numberwithin{subcase}{case}
\newcommand{\er}{\mathrm{er}}
\newcommand{\pr}{\mathrm{pr}}
\newcommand{\pro}{\mathcal{P}}
\newcommand{\evac}{\mathcal{E}}
\newcommand{\dualevac}{\mathcal{E}^*}
\newcommand{\rot}{\mathtt{rot}}
\newcommand{\flag}{\mathcal{F}}
\newcommand{\skewflag}{\Finv}
\newcommand{\maxv}[1]{\mathrm{max}(#1)}
\newcommand{\maj}{\mathrm{maj}}
\newcommand{\inc}[2]{\mathrm{Inc}_#2(#1)}
\newcommand{\twinc}[2]{\mathrm{Inc}_#2(2 \times #1)}
\newcommand{\syt}[1]{\mathrm{SYT}(#1)}
\newcommand{\hgd}[1]{\mathrm{hgd}(#1)}
\newcommand{\blank}{\phantom{2}}
\renewcommand{\binom}[2]{\genfrac{(}{)}{0pt}{}{#1}{#2}}
\newcommand{\qbinom}[2]{\genfrac{[}{]}{0pt}{}{#1}{#2}_q}
\newcommand{\N}{\mathbb{N}}
\newcommand{\Z}{\mathbb{Z}}
\newcommand{\R}{\mathbb{R}}
\mathchardef\pFcomma=\mathcode`, % keep a copy of the comma
\newcommand*\pFq[5]{%
  \begingroup
  \begingroup\lccode`~=`,
    \lowercase{\endgroup\def~}{\pFcomma\mkern\pFqskip}%
  \mathcode`,=\string"8000
  {}_{#1}F_{#2}\biggl(\genfrac..{0pt}{}{#3}{#4};#5\biggr)%
  \endgroup
}
\begin{document}

\title{Cyclic Sieving of Increasing Tableaux and small Schr\"oder Paths}
\author{Oliver Pechenik}
\address{Department of Mathematics \\ University of Illinois at Urbana--Champaign \\ Urbana, IL 61801 \\ USA}
\email{pecheni2@illinois.edu}
\date{\today}

\begin{abstract}
An \emph{increasing tableau} is a semistandard tableau with strictly increasing rows and columns. It is well known that the Catalan numbers enumerate both rectangular standard Young tableaux of two rows and also Dyck paths. We generalize this to a bijection between rectangular 2-row increasing tableaux and small Schr\"oder paths.  We demonstrate relations between the jeu de taquin for increasing tableaux developed by H.~Thomas and A.~Yong and the combinatorics of tropical frieze patterns. We then use this jeu de taquin to present new instances of the cyclic sieving phenomenon of V.~Reiner, D.~Stanton, and D.~White, generalizing results of D.~White and of J.~Stembridge.
\end{abstract}

\maketitle 

\section{Introduction}\label{sec:introduction}
An \emph{increasing tableau} is a semistandard tableau such that all rows and columns are strictly increasing and the set of entries is an initial segment of $\Z_{>0}$. For $\lambda$ a partition of $N$, we write $|\lambda|=N$.  We denote by $\inc{\lambda}{k}$ the set of increasing tableaux of shape $\lambda$ with maximum value $|\lambda|- k$. Similarly $\syt{\lambda}$ denotes standard Young tableaux of shape $\lambda$. Notice $\inc{\lambda}{0} = \syt{\lambda}$. We routinely identify a partition $\lambda$ with its Young diagram; hence for us the notations $\syt{m \times n}$ and $\syt{n^m}$ are equivalent.

 A \emph{small Schr\"oder path} is a planar path from the origin to $(n,0)$ that is constructed from three types of line segment: \emph{upsteps} by $(1,1)$, \emph{downsteps} by $(1,-1)$, and \emph{horizontal steps} by $(2,0)$, so that the path never falls below the horizontal axis and no horizontal step lies on the axis. The $n$th \emph{small Schr\"oder number} is defined to be the number of such paths. A \emph{Dyck path} is a small Schr\"oder path without horizontal steps.

Our first result is an extension of the classical fact that Catalan numbers enumerate both Dyck paths and rectangular standard Young tableaux of two rows, $\syt{2 \times n}$. For $T \in \inc{2 \times n}{k}$, let $\maj(T)$ be the sum of all $i$ in row 1 such that $i+1$ appears in row 2. 
\begin{theorem}\label{thm:enumeration}
There are explicit bijections between $\twinc{n}{k}$, small Schr\"oder paths with $k$ horizontal steps, and $\syt{n-k, n-k, 1^k}$. This implies the identity \begin{equation}\label{eq:q-enumerator_in_section_1}\sum_{T \in \twinc{n}{k}} q^{\maj(T)} = q^{n + \binom{k}{2}} \frac{\qbinom{n-1}{k} \qbinom{2n-k}{n-k-1}}{[n-k]_q}.\end{equation} In particular, the total number of increasing tableaux of shape $2 \times n$  is the $n$th small Schr\"oder number.
\end{theorem}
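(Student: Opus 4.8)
The plan is to establish the three bijections in a chain, then read off the $q$-enumeration from the one to standard Young tableaux and the Schr\"oder count from the one to paths.

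\emph{Increasing tableaux and small Schr\"oder paths.} Write $T \in \twinc{n}{k}$ as two strictly increasing rows $a_1 < \cdots < a_n$ and $b_1 < \cdots < b_n$; strictness of columns says $a_j < b_j$, and since the entry set is $\{1,\dots,2n-k\}$ exactly $k$ values occur in both rows. I would send $T$ to the path obtained by scanning $v = 1,2,\dots,2n-k$ in order and recording an up-step if $v$ lies only in row $1$, a down-step if $v$ lies only in row $2$, and a horizontal step if $v$ lies in both rows; this has $n-k$ up-steps, $n-k$ down-steps, and $k$ horizontal steps. Its height after $v$ equals $\#\{i:a_i\le v\}-\#\{j:b_j\le v\}\ge 0$, since $b_j\le v$ forces $a_j<v$; and if $v=a_i=b_j$ is a repeated value then $a_j<a_i$ gives $j<i$, so the height just before the corresponding horizontal step is $(i-1)-(j-1)\ge 1$ and that step does not lie on the axis. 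Hence we land among small Schr\"oder paths (when $k=0$ this is the classical bijection between $\syt{2\times n}$ and Dyck paths). Conversely, a small Schr\"oder path with $k$ horizontal steps prescribes, for each $v\in\{1,\dots,2n-k\}$, the row(s) it should occupy; filling each row in increasing order and using the ballot conditions just observed (heights $\ge 0$, with strict inequality just before each horizontal step) to verify $a_j<b_j$ recovers a unique increasing tableau. These two maps are mutually inverse.

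\emph{Passing to $\syt{n-k,n-k,1^k}$.} I would next biject the small Schr\"oder paths with $k$ horizontal steps --- equivalently, the tableaux $\twinc{n}{k}$ --- with $\syt{n-k,n-k,1^k}$. The natural idea is to re-read the path's word so that the $k$ entries that produced horizontal steps become the $k$ cells of the leg $1^k$, while up-steps and down-steps continue to feed rows $1$ and $2$; the reason a repeated value may be pushed down into the leg rather than left sitting in both rows is exactly that the path is off the axis there, which is what keeps the first column of the output strictly increasing. I expect \emph{this to be the main obstacle}: making the relabelling precise and proving that it is a well-defined bijection onto standard Young tableaux of precisely this shape (the inverse should amputate the leg, standardize the remaining $2\times(n-k)$ rectangle, and reinsert the leg entries). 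Along the way I would track the statistic: following which indices become descents shows that $\maj(T)$ and $\maj$ of the corresponding standard tableau differ by a constant depending only on $n$ and $k$.

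\emph{The $q$-identity and the small Schr\"oder number.} Granting the previous bijection together with this explicit statistic shift, \eqref{eq:q-enumerator_in_section_1} reduces to Stanley's $q$-analogue of the hook length formula, $\sum_{S\in\syt{\mu}}q^{\maj(S)}=q^{\sum_i(i-1)\mu_i}\,[|\mu|]_q!\big/\prod_{c\in\mu}[h(c)]_q$. For $\mu=(n-k,n-k,1^k)$ the hook lengths are immediate: cells $(1,1)$ and $(2,1)$ have hooks $n+1$ and $n$, the remaining cells of rows $1$ and $2$ contribute $[n-k]_q!$ and $[n-k-1]_q!$, and the leg contributes $[k]_q!$, so $\prod_{c\in\mu}[h(c)]_q=[n+1]_q[n]_q[n-k]_q![n-k-1]_q![k]_q!$, while $\sum_i(i-1)\mu_i=n+\binom{k+1}{2}$ is a similarly short computation. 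Substituting, cancelling the factorials, accounting for the statistic shift, and rewriting the result in terms of $\qbinom{n-1}{k}$ and $\qbinom{2n-k}{n-k-1}$ yields the stated formula --- a routine manipulation. Finally, summing the first bijection over all $k$ shows that the increasing tableaux of shape $2\times n$ are equinumerous with all the small Schr\"oder paths, i.e.\ with the $n$th small Schr\"oder number (equivalently, set $q=1$ in \eqref{eq:q-enumerator_in_section_1} and sum over $k$).
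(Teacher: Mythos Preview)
Your bijection to small Schr\"oder paths is exactly the paper's, and your verification that the path stays weakly above the axis (strictly at each horizontal step) is clean.

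The real content of the theorem, however, is the bijection to $\syt{n-k,n-k,1^k}$, and here your proposal has a genuine gap. You propose to send the $k$ doubled values (the horizontal steps) into the leg, leaving the upstep and downstep values in rows~1 and~2. This does \emph{not} in general produce a standard Young tableau: take the paper's running example $T$ with rows $1\,2\,4\,5\,6$ and $2\,3\,6\,7\,8$. Your rule gives row~1 $=\{1,4,5\}$, row~2 $=\{3,7,8\}$, leg $=\{2,6\}$, whose first column $1,3,2,6$ is not increasing. Your justification (``the path is off the axis there, which is what keeps the first column strictly increasing'') only guarantees that a doubled value exceeds some earlier upstep value, not that it exceeds the entry sitting in cell $(2,1)$. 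So the map as stated is not well defined, and you correctly flag this step as the main obstacle without resolving it.

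The paper's bijection is different and worth knowing. Let $A$ be the set of values appearing in both rows of $T$, and let $B$ be the set of values in row~2 lying \emph{immediately to the right} of an element of $A$. One deletes $A$ from row~1 and $B$ from row~2 to get a $2\times(n-k)$ standard tableau, then appends the elements of $B$ (not $A$!) below column~1 to form the leg. The inverse is equally explicit. This choice has a payoff you were hoping for but could not pin down: under this bijection one has $\maj(T)=\maj(S)$ \emph{exactly} (the ``constant shift'' is zero), because $i$ is a descent of $S$ if and only if $i$ lies in row~1 of $T$ with $i+1$ in row~2. With that in hand, your hook-length computation (which is correct, including the exponent $b(\mu)=n+\binom{k+1}{2}$) finishes the $q$-identity immediately, and the Schr\"oder enumeration follows from the path bijection as you say.
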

\noindent The ``flag-shaped'' standard Young tableaux of Theorem \ref{thm:enumeration} were previously considered by R.~Stanley \cite{stanley:flag_tableaux} in relation to polygon dissections.

Suppose $X$ is a finite set, ${\mathcal C}_n = \langle c \rangle$ a cyclic group acting on $X$, and $f \in \Z[q]$ a polynomial. The triple $(X,\mathcal{C}_n, f)$ has the  \emph{cyclic sieving phenomenon} \cite{reiner.stanton.white} if for all $m$, the number of elements of $X$ fixed by $c^m$ is $f(\zeta^m)$, where $\zeta$ is any primitive $n$th root of unity. D.~White \cite{white} discovered a cyclic sieving for $2 \times n$ standard Young tableaux. For this, he used a $q$-analogue of the hook-length formula (that is, a $q$-analogue of the Catalan numbers) and a group action by jeu de taquin promotion. B.~Rhoades  \cite[Theorem~1.3]{Rhoades} generalized this result from $\inc{2 \times n}{0}$ to $\inc{m \times n}{0}$. Our main result is a generalization of D.~White's result in another direction, from $\twinc{n}{0}$ to $\twinc{n}{k}$.

We first define \emph{K-promotion} for increasing tableaux.  Define the \emph{SE-neighbors} of a box to be the (at most two) boxes immediately below it or right of it. Let $T$ be an increasing tableau with maximum entry $M$. Delete the entry 1 from $T$, leaving an empty box. Repeatedly perform the following operation simultaneously on all empty boxes until no empty box has a SE-neighbor: Label each empty box by the minimal label of its SE-neighbors and then remove that label from the SE-neighbor(s) in which it appears. If an empty box has no SE-neighbors, it remains unchanged. We illustrate the local changes in Figure \ref{fig:local_moves}. 

\ytableausetup{boxsize=1.1em}

\begin{figure}[h]
\ytableaushort{\blank i,j} $\; \mapsto$ \ytableaushort{i \blank,j} \hspace{6mm}
\ytableaushort{\blank j,i} $\; \mapsto$ \ytableaushort{ij, \blank} \hspace{6mm}
\ytableaushort{\blank i,i} $\; \mapsto$ \ytableaushort{i \blank, \blank} \hspace{6mm}
\ytableaushort{\none \blank, \blank i} $\; \mapsto$ \ytableaushort{\none i, i \blank}
\caption{Local changes during K-promotion for $i<j$.}\label{fig:local_moves}
\end{figure}

\noindent Notice that the number of empty boxes may change during this process. Finally we obtain the K-promotion $\pro(T)$ by labeling all empty boxes by $M+1$ and then subtracting one from every label. Figure \ref{fig:promotion} shows a full example of K-promotion. 

\begin{figure}[h]
\ytableaushort{124,345} $\; \mapsto$ \ytableaushort{\blank 2 4, 345} $\; \mapsto$ \ytableaushort{2 \blank 4,345} $\; \mapsto$  \ytableaushort{24 \blank,3 \blank 5} $\; \mapsto$  \ytableaushort{245,35 \blank} $\; \mapsto$  \ytableaushort{134,245}
\caption{K-promotion.}\label{fig:promotion}
\end{figure}

Our definition of K-promotion is analogous to that of ordinary promotion, but uses the K-jeu de taquin of H.~Thomas--A.~Yong \cite{thomas.yong:K} in place of ordinary jeu de taquin. (The `K' reflects their original development of K-jeu de taquin in application to K-theoretic Schubert calculus.) Observe that on standard Young tableaux, promotion and K-promotion coincide. 

K-evacuation \cite[$\mathsection 4$]{thomas.yong:K} is defined as follows. Let $T$ be a increasing tableau with maximum entry $M$, and let $[T]_j$ denote the Young diagram consisting of those boxes of $T$ with entry $i \leq j$. Then the K-evacuation $\evac(T)$ is the increasing tableau encoded by the chain in Young's lattice $([\pro^{M - j}(T)]_j)_{0 \leq j \leq M}$. Like ordinary evacuation, $\evac$ is an involution.

Let the non-identity element of $\mathcal{C}_2$ act on $\twinc{n}{k}$ by K-evacuation. We prove the following cyclic sieving, generalizing a result of Stembridge \cite{stembridge:evacuation}.

\begin{theorem}\label{thm:evacuation_CSP}
For all $n$ and $k$, the triple $\big(\twinc{n}{k}, \mathcal{C}_{2}, f\big)$ has the cyclic sieving phenomenon, where \begin{equation}\label{eq:f}f(q):=\frac{\qbinom{n-1}{k} \qbinom{2n-k}{n-k-1}}{[n-k]_q}\end{equation} is the $q$-enumerator from Theorem \ref{thm:enumeration}.
\end{theorem}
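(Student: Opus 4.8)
The plan is to deduce this $\mathcal{C}_2$-cyclic sieving from two ingredients: the product formula \eqref{eq:f}, which is already available from Theorem~\ref{thm:enumeration}, and a combinatorial identification of the fixed-point set of $\evac$ acting on $\twinc{n}{k}$. Recall that for a cyclic group of order $2$ the cyclic sieving condition reduces to exactly two assertions: that $f(1) = |\twinc{n}{k}|$, which is immediate from Theorem~\ref{thm:enumeration}, and that $f(-1)$ equals the number of tableaux in $\twinc{n}{k}$ fixed by K-evacuation. So the real content is to compute $f(-1)$ on the one hand and $|\{T \in \twinc{n}{k} : \evac(T) = T\}|$ on the other, and check they agree.

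First I would evaluate $f(-1)$. Substituting $q = -1$ into the $q$-binomial coefficients in \eqref{eq:f} is a routine application of the standard evaluation of Gaussian binomial coefficients at $-1$ (the ``$q=-1$ phenomenon''): $\qbinom{a}{b}$ at $q=-1$ is $\binom{\lfloor a/2\rfloor}{\lfloor b/2\rfloor}$ when it is not forced to vanish, and $[m]_q$ at $q=-1$ is $0$ or $\pm 1$ according to the parity of $m$. Carrying this through for $\qbinom{n-1}{k}$, $\qbinom{2n-k}{n-k-1}$, and the denominator $[n-k]_q$, and splitting into cases according to the parities of $n$ and $k$, yields a closed product/binomial expression for $f(-1)$; I expect this to come out to something like a single binomial coefficient counting certain symmetric lattice configurations (e.g. roughly $\binom{\lfloor n/2 \rfloor}{\lfloor k/2 \rfloor}$-type quantities with a parity constraint), with $f(-1) = 0$ in the parity cases where no self-evacuating tableau can exist.

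The heart of the argument is then to enumerate the $\evac$-fixed tableaux directly and match this count. Here I would use the structure developed earlier in the paper: the bijection of Theorem~\ref{thm:enumeration} transporting $\twinc{n}{k}$ to small Schr\"oder paths with $k$ horizontal steps (equivalently to flag-shaped $\syt{n-k,n-k,1^k}$), together with the fact—which I would establish as a lemma if not already implicit—that K-evacuation corresponds under this bijection to the obvious reflection symmetry (reversal) of Schr\"oder paths, just as ordinary evacuation corresponds to path-reversal in the classical Dyck-path case. Granting that, self-evacuating increasing tableaux biject with reversal-symmetric small Schr\"oder paths having $k$ horizontal steps; such a path is determined by its ``first half'' together with its behavior at the midpoint, and counting these is an elementary lattice-path computation that produces exactly the binomial expression obtained above for $f(-1)$, including the vanishing in the bad-parity cases (where symmetry is incompatible with having exactly $k$ horizontal steps or reaching height constraints at the center).

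The main obstacle I anticipate is the lemma identifying K-evacuation with path-reversal: K-jeu de taquin is not as well-behaved as ordinary jeu de taquin (entries can merge, shapes of intermediate diagrams fluctuate), so the standard proof that evacuation is an anti-automorphism intertwining with reversal does not transfer verbatim. I would prove it by tracking the chain of Young diagrams $([\pro^{M-j}(T)]_j)_j$ defining $\evac$ and comparing, step by step, with the growth/decay of the Schr\"oder path under reversal—or, alternatively, by using the Thomas--Yong theory of K-evacuation as a genuine involution and its known compatibility with rectification, reducing the claim to the two-row rectangular case where the path picture is explicit. Once that compatibility lemma is in hand, the rest—the $q=-1$ evaluation and the symmetric-path count—is bookkeeping, and the two sides agree by construction.
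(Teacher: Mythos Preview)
Your approach is sound and would yield a valid proof, but it is a genuinely different route from the paper's. The paper does not evaluate $f(-1)$ directly or count symmetric Schr\"oder paths at all; instead it transports the entire problem to standard Young tableaux via the flag bijection $\flag\colon \twinc{n}{k} \to \syt{n-k,n-k,1^k}$ of Proposition~\ref{prop:flag_bijection}, proves that $\flag$ intertwines K-evacuation with ordinary evacuation (Proposition~\ref{prop:evacuation_is_evacuation}), and then simply invokes Stembridge's theorem that $(\syt{\lambda}, \mathcal{C}_2, f^\lambda)$ is always a cyclic sieving triple. The technical core there is a Schensted-bumping argument (Lemma~\ref{lem:skew_flag_rectification}) showing that a skew companion tableau $\skewflag(T)$ rectifies to $\flag(T)$. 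Your route, by contrast, is self-contained and elementary: no Kazhdan--Lusztig input, just the $q=-1$ evaluation of Gaussian binomials and a count of centrally symmetric paths. What the paper's approach buys is that the matching of fixed-point count to polynomial value is outsourced wholesale to Stembridge; what yours buys is independence from representation theory, at the cost of an explicit parity case analysis. Note finally that the lemma you flag as the main obstacle---that K-evacuation corresponds to path reversal---is exactly Proposition~\ref{prop:evac=rot} combined with Lemma~\ref{lem:rot_is_reflection_of_path}, already established in Section~\ref{sec:schroeder_paths}; the paper's own argument also leans on Proposition~\ref{prop:evac=rot} in proving Proposition~\ref{prop:evacuation_is_evacuation}, so both routes share that ingredient, and with it in hand your remaining bookkeeping is indeed routine.
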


We will then need:

\begin{theorem}\label{thm:orbit_size}
For all $n$ and $k$, there is an action of the cyclic group $\mathcal{C}_{2n-k}$ on $T \in \twinc{n}{k}$, where a generator acts by K-promotion.
\end{theorem}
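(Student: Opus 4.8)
The plan is to show that $\pro^{\,2n-k}$ is the identity map on $\twinc{n}{k}$. Every tableau in $\twinc{n}{k}$ has maximum entry $M=2n-k$, and K-promotion preserves both the shape $2\times n$ and the maximum entry, so $\pro$ is at least a well-defined self-map of $\twinc{n}{k}$; the theorem is then exactly the assertion that $\pro$ generates a $\mathcal{C}_{2n-k}$-action. I would follow the classical template for rectangular standard Young tableaux, where one deduces $\pro^{|\lambda|}=\mathrm{id}$ by combining the identity $\pro^{|\lambda|}=\dualevac\circ\evac$ with the fact that, for a rectangle, ordinary and dual evacuation coincide, so that the right-hand side collapses to $\evac^{2}=\mathrm{id}$. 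The work is to transport both ingredients to the K-theoretic setting, whose new feature is that the set of boxes carrying a fixed label can exceed a single box, so that a chain in Young's lattice may jump by more than one box at a time.

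First I would assemble the formal apparatus. Identify $T\in\twinc{n}{k}$ with the chain $\emptyset=\mu^{0}\subsetneq\mu^{1}\subsetneq\cdots\subsetneq\mu^{M}=(n,n)$ in Young's lattice in which each $\mu^{j}/\mu^{j-1}$ is a set of boxes no two of which share a row or a column. Using reversibility of the K-jeu de taquin slides of Thomas--Yong, one gets that $\pro$ is a bijection of $\twinc{n}{k}$, that $\evac$ is the involution asserted in the text, and that there is a dual K-evacuation $\dualevac$, obtained by running K-jeu de taquin from the southeast corner, which is again an involution. The two structural identities I would then establish, both by the standard commuting-square (growth-diagram) argument adapted to K-jeu de taquin, are
\[
\evac\circ\pro\circ\evac=\pro^{-1}
\qquad\text{and}\qquad
\pro^{\,M}=\dualevac\circ\evac ,
\]
the K-analogues of the classical promotion/evacuation relations; once the K-growth-diagram picture is in place their proofs should be essentially formal, and only the second will actually be needed.

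The crux is the equality $\evac=\dualevac$ on $\twinc{n}{k}$. Because $2\times n$ is a rectangle, the $180^{\circ}$ rotation of the Young diagram is a symmetry of the shape, and it induces an involution $\rot$ of $\twinc{n}{k}$: rotate the tableau and replace each entry $i$ by $M+1-i$. One checks this sends increasing tableaux to increasing tableaux while preserving the shape, the maximum entry, and hence $k$. By construction $\dualevac=\rot\circ\evac\circ\rot$, so $\evac=\dualevac$ is equivalent to the commutation of K-evacuation with $\rot$. I would prove this either by induction on $n$ --- peeling off the strips carrying the smallest and the largest label and tracking how $\rot$ interacts with the inner and outer K-jeu de taquin that compute $\evac$ --- or, more cleanly, directly on the growth diagram: passing to $\evac$ reflects the diagram across its main diagonal, passing to $\rot$ rotates it by $180^{\circ}$ and reverses the labelling, and for the square array coming from a rectangular shape these two symmetries commute. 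I expect this to be the principal obstacle: it is the only step that genuinely uses rectangularity, and one must verify that the features peculiar to K-jeu de taquin --- dominoes and doubly occupied strips --- respect the rotational symmetry rather than destroy it. Indeed the analogous statement fails for increasing tableaux of general rectangular shape, which is why the theorem is confined to two rows.

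Given $\evac=\dualevac$, the second structural identity yields $\pro^{\,2n-k}=\pro^{M}=\dualevac\circ\evac=\evac^{2}=\mathrm{id}$, which proves the theorem. An alternative would be to transport everything across the bijection of Theorem~\ref{thm:enumeration} to ordinary promotion on the flag tableaux $\syt{n-k,n-k,1^{k}}$, or to pass to the correspondence with tropical frieze patterns, where K-promotion should become a manifest periodicity; but establishing equivariance of such a bijection appears no easier than the evacuation argument above.
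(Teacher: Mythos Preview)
Your reduction matches the paper exactly: Lemma~\ref{lem:basic_combinatorics_of_P_and_E} and Corollary~\ref{cor:evacuation_rank} establish $\pro^{M}=\dualevac\circ\evac$ and $\dualevac=\rot\circ\evac\circ\rot$, so Theorem~\ref{thm:orbit_size} reduces to the key claim $\evac=\dualevac$ on $\twinc{n}{k}$ (the paper proves the slightly stronger Proposition~\ref{prop:evac=rot}, $\evac=\rot$). The structural identities you list are indeed formal consequences of the K-growth-diagram local rules and require no two-row hypothesis.

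The gap is in your proof of the key claim. Your ``cleaner'' growth-diagram argument is circular. It is true that passing from $T$ to $\evac(T)$ corresponds to reflecting the K-growth diagram across its main diagonal. But passing from $T$ to $\rot(T)$ does \emph{not} correspond to rotating the same finite array by $180^\circ$ and complementing shapes: that operation sends the bottom row (encoding $\pro^{M}(T)$) to the top, so the rotated array is the growth diagram for $\rot(\pro^{M}(T))$, not for $\rot(T)$. These coincide precisely when $\pro^{M}(T)=T$, which is what is to be proved. Unwinding, the relation the rotational symmetry actually yields is $\pro^{M}\circ\rot\circ\pro^{M}=\rot$, which is already a formal consequence of $\pro^{M}=\dualevac\circ\evac$ and carries no information. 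This is consistent with the $4$-row counterexample you acknowledge (Example~\ref{ex:counterexample_to_orbit_size_conjecture}): any argument using only rectangularity and growth-diagram symmetries cannot distinguish two rows from four and hence cannot succeed.

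The paper's proof of Proposition~\ref{prop:evac=rot} is genuinely two-row specific. It encodes $T$ by the height word $S_T$ of its small Schr\"oder path, gives an explicit description of how K-promotion transforms $S_T$ (Lemma~\ref{lem:promotion_word_algorithm}), and then proves by a case-heavy induction on the word length (Lemma~\ref{lem:equality_of_first_row_last_column_in_hgd}) that the first row and rightmost column of the height growth diagram agree. Two further proofs are supplied: via tropical frieze patterns (Section~\ref{sec:tropical_frieze}) and via the bijection to noncrossing partitions, under which K-promotion becomes literal rotation (Lemma~\ref{lem:promotion_is_rotation_of_partitions}). Your alternative induction-on-$n$ sketch is not obviously wrong, but as written it gives no indication of where the two-row hypothesis enters, and that is precisely where all the work lies.
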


In the case $k=0$, Theorem \ref{thm:orbit_size} is implicit in work of M.-P.~Sch\"{u}tzenberger (\emph{cf.}~\cite{haiman, stanley:promotion}). The bulk of this paper is devoted to proofs of Theorem \ref{thm:orbit_size}, which we believe provide different insights. Finally we construct the following cyclic sieving.

\begin{theorem}\label{thm:promotion_CSP}
For all $n$ and $k$, the triple $\big(\twinc{n}{k}, \mathcal{C}_{2n-k}, f\big)$ has the cyclic sieving phenomenon.
\end{theorem}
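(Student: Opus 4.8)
The plan is to combine the three preceding theorems via the now-standard ``$q$-enumerator plus cyclic action'' recipe for establishing cyclic sieving, so that the only genuinely new input needed beyond Theorems~\ref{thm:enumeration}--\ref{thm:orbit_size} is a root-of-unity evaluation of $f(q)$ and a comparison of the $\mathcal{C}_{2n-k}$-action with the $\mathcal{C}_2$-action already understood. By Theorem~\ref{thm:orbit_size} the cyclic group $\mathcal{C}_{2n-k} = \langle \pro \rangle$ acts on $X := \twinc{n}{k}$, and by Theorem~\ref{thm:enumeration} we have $|X| = f(1)$, which disposes of the $m=0$ case and also confirms $f(q) \in \Z[q]$. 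For general $m$ I would proceed in two stages: first reduce the computation of $\#\{T \in X : \pro^m(T) = T\}$ to a statement about fixed points of a single involution, and second evaluate $f(\zeta^m)$ at roots of unity $\zeta$.

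For the first stage, I would exploit the relationship between promotion and evacuation established by Thomas--Yong \cite{thomas.yong:K}: just as in the classical (K-theory-free) setting, $\evac \circ \pro = \pro^{-1} \circ \evac$ and, more importantly, $\evac$ conjugates a power of promotion to its inverse, while the dual K-evacuation $\dualevac$ satisfies $\dualevac \circ \evac = \pro^{\,?}$ up to a controlled power. Concretely, the key identity I want is that for $T \in \twinc{n}{k}$ (which has $M = 2n-k$ distinct entries), $\pro^{2n-k}$ is the identity and $\evac$, $\dualevac$ are two commuting involutions whose product is a power of $\pro$; hence the fixed points of $\pro^m$ can be analyzed orbit-by-orbit, and on each $\langle \pro \rangle$-orbit the count of $\pro^m$-fixed points depends only on $\gcd(m, 2n-k)$ and on how $\evac$ permutes that orbit. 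Theorem~\ref{thm:evacuation_CSP} tells us precisely the number of $\evac$-fixed points, namely $f(-1)$ counted with the sieving sign convention. Feeding this into the orbit analysis should express $\#\{T : \pro^m(T)=T\}$ purely in terms of the cycle structure of promotion together with the evacuation data, which is exactly the shape of information a CSP needs.

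For the second stage, the root-of-unity evaluation, I would manipulate
\[
f(q) = \frac{\qbinom{n-1}{k}\,\qbinom{2n-k}{n-k-1}}{[n-k]_q}
\]
using the $q$-Lucas theorem for $q$-binomial coefficients at a primitive $d$th root of unity $\zeta$ (where $d \mid 2n-k$), together with the cancellation of the $[n-k]_q$ denominator against a factor in the second $q$-binomial coefficient. This is the same style of computation that underlies White's original $2\times n$ result and Rhoades' $m\times n$ generalization; the presence of the extra $\qbinom{n-1}{k}$ factor and the Schröder-path parameter $k$ makes the bookkeeping heavier but not structurally different. One writes $2n-k = d\cdot a$ and reduces each $q$-binomial modulo $\zeta^d = 1$, checks that poles and zeros from the denominator cancel, and identifies the resulting integer with the fixed-point count from stage one.

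The main obstacle I anticipate is stage one: proving that $\evac$ (equivalently $\dualevac$) realizes the ``reflection'' of each promotion orbit in the exact way required, i.e.\ that $\pro$ and $\evac$ generate a dihedral action of order $2(2n-k)$ on $\twinc{n}{k}$ in which $\evac$ plays the role of a reflection through a vertex. In the standard tableaux case this is classical Schützenberger theory (and is how White's CSP is proved), but for \emph{increasing} tableaux the relevant compatibility between K-promotion and K-evacuation must be extracted carefully from \cite{thomas.yong:K}, and one must be sure that the ``growth diagram'' / tropical frieze description of K-jeu de taquin developed earlier in the paper does furnish the needed commutation relation; this is the technical heart. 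Once that dihedral structure is in hand, the passage to the CSP is a formal consequence of Theorem~\ref{thm:evacuation_CSP} (which controls the reflections), Theorem~\ref{thm:orbit_size} (which controls the rotation), and the $q$-Lucas evaluation of $f$.
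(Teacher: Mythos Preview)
Your stage two is roughly in line with the paper's approach (Lemma~\ref{rem:tedious_calculation} is exactly a root-of-unity evaluation of $f$), but stage one contains a genuine gap that cannot be repaired along the lines you sketch.

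The problem is this: even granting the full dihedral structure---that $\pro$ has order dividing $2n-k$, that $\evac$ is an involution, and that $\evac\,\pro\,\evac=\pro^{-1}$---the number of fixed points of $\evac$ does \emph{not} determine the number of fixed points of $\pro^{m}$ for each $m$. On a single $\pro$-orbit of size $\ell$, the number of $\pro^{m}$-fixed points is $\ell$ if $\ell\mid m$ and $0$ otherwise; this depends only on $\ell$, not on how $\evac$ acts. Conversely, the total $\evac$-fixed-point count (which is the single number $f(-1)$ supplied by Theorem~\ref{thm:evacuation_CSP}) places only a weak parity constraint on each orbit and says nothing about which divisors of $2n-k$ actually occur as orbit sizes, let alone with what multiplicity. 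Concretely, a dihedral group of order $2N$ acting on a set of given size, with a prescribed number of reflection-fixed points, is compatible with many different distributions of rotation-orbit sizes. So there is no ``formal consequence'' taking you from Theorems~\ref{thm:evacuation_CSP} and~\ref{thm:orbit_size} to Theorem~\ref{thm:promotion_CSP}. The paper itself notes (Section~\ref{sec:representation_theory}) that the representation-theoretic route behind Theorem~\ref{thm:evacuation_CSP} does not extend to Theorem~\ref{thm:promotion_CSP}: no element of $S_{2n-k}$ acts on the relevant Specht module by K-promotion.

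What the paper does instead is bypass evacuation entirely. It uses the bijection $\pi$ of Proposition~\ref{prop:partition_bijection} to noncrossing partitions of $2n-k$ into $n-k$ blocks with no singletons, and proves (Lemma~\ref{lem:promotion_is_rotation_of_partitions}) that K-promotion becomes literal rotation by $2\pi/(2n-k)$. Fixed points of $\pro^{m}$ are then noncrossing partitions invariant under rotation by $2\pi/d$ with $d=(2n-k)/\gcd(m,2n-k)$; these are counted via type-$B$ noncrossing partitions and inclusion--exclusion (Lemma~\ref{lem:enumeration_of_type_B_partitions}), and a hypergeometric identity matches the answer to $f(\zeta)$. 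If you want to salvage your outline, replace stage one with this promotion-equals-rotation lemma; your stage two then becomes the comparison of the explicit fixed-point count with the root-of-unity evaluation.
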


Our proof of Theorem~\ref{thm:evacuation_CSP} is by reduction to a result of J.~Stembridge \cite{stembridge:evacuation}, which relies on results about the Kazhdan--Luszig cellular representation of the symmetric group. Similarly, all proofs \cite{Rhoades, purbhoo, fontaine.kamnitzer} of B.~Rhoades' theorem for standard Young tableaux use representation theory or geometry. (Also \cite{webs}, giving new proofs of the 2- and 3-row cases of B.~Rhoades' result, uses representation theory.) In contrast, our proof of Theorem \ref{thm:promotion_CSP} is completely elementary. It is natural to ask also for such representation-theoretic or geometric proofs of Theorem~\ref{thm:promotion_CSP}. We discuss obstacles in Section~\ref{sec:representation_theory}. We do not know a common generalization of our Theorem~\ref{thm:promotion_CSP} and B.~Rhoades' theorem to $\inc{m \times n}{k}$. One obstruction is that for $k>0$, Theorem~\ref{thm:orbit_size} does not generalize in the obvious way to tableaux of more than 3 rows (\emph{cf.}~Example~\ref{ex:counterexample_to_orbit_size_conjecture}).

This paper is organized as follows. In Section \ref{sec:enumerations}, we prove Theorem \ref{thm:enumeration}. We include an additional bijection (to be used in Section~\ref{sec:CSP}) between $\twinc{n}{k}$ and certain noncrossing partitions that we interpret as generalized noncrossing matchings. In Section~\ref{sec:schroeder_paths}, we use the combinatorics of small Schr\"oder paths to prove Theorem \ref{thm:orbit_size} and a characterization of K-evacuation necessary for Theorem~\ref{thm:evacuation_CSP}. We also provide a counterexample to the naive generalization of Theorem~\ref{thm:orbit_size} to 4-row increasing tableaux. In Section~\ref{sec:tropical_frieze}, we make connections with tropicalizations of Conway--Coxeter frieze patterns and demostrate a frieze-diagrammatic approach to some of the key steps in the previous section. In Section \ref{sec:representation_theory}, we prove Theorem~\ref{thm:evacuation_CSP} by interpreting it representation-theoretically in the spirit of \cite{stembridge:evacuation} and \cite{Rhoades}, and discuss obstacles to such an interpretation of Theorem~\ref{thm:promotion_CSP}. In Section \ref{sec:CSP}, we use noncrossing partitions to give another proof of Theorem \ref{thm:orbit_size} and to prove Theorem~\ref{thm:promotion_CSP}.

\section{Bijections and Enumeration}\label{sec:enumerations}

\begin{proposition}\label{prop:flag_bijection}
There is an explicit bijection between $\twinc{n}{k}$ and $\syt{n-k, n-k, 1^k}$.
\end{proposition}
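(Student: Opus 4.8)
The plan is to realize the bijection as a composition of three elementary bijections, passing through a set of lattice‑path–type words (essentially small Schr\"oder paths). First I encode a tableau $T \in \twinc{n}{k}$ by a word $w(T) = w_1 \cdots w_{2n-k}$ in the letters $U, D, H$: since $T$ is increasing with entry set exactly $\{1,\dots,2n-k\}$, each value $i$ occurs only in row $1$, only in row $2$, or in both rows (it cannot occur twice in a strictly increasing row, nor in a strictly increasing column), and I set $w_i$ to be $U$, $D$, or $H$ in these three cases. Counting boxes gives $n-k$ letters $U$, $n-k$ letters $D$, and $k$ letters $H$. The content of this step is that column‑strictness $r_j < s_j$ between the rows $r_1 < \cdots < r_n$ and $s_1 < \cdots < s_n$ is equivalent to: every prefix of $w(T)$ has $\#U \ge \#D$, with $\#U > \#D$ at every position occupied by an $H$. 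Conversely such a word reconstructs $T$ uniquely --- read the $j$‑th letter of $\{U,H\}$‑type as $r_j$ and the $j$‑th letter of $\{D,H\}$‑type as $s_j$ --- so $T \mapsto w(T)$ is a bijection onto this word set. (Under $U\mapsto$ upstep, $D\mapsto$ downstep, $H\mapsto$ horizontal step these are exactly the small Schr\"oder paths with $k$ horizontal steps, which also yields the second bijection asserted in Theorem~\ref{thm:enumeration}.)

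Symmetrically, I encode $S \in \syt{n-k,n-k,1^k}$ by a word $v(S)$ in the letters $A, B, C$, where $v_i$ records whether the value $i$ lies in row $1$, in row $2$, or in the leg (the boxes in rows $3$ through $k+2$) of $S$; since the leg is a single column this loses no information. The requirement that all partial shapes $[S]_j$ be Young diagrams --- necessarily of the form $(a,b,1^c)$ --- translates into: every prefix of $v(S)$ has $\#A \ge \#B$, and every prefix ending in $C$ has $\#B \ge 1$. Thus $S \mapsto v(S)$ is a bijection onto the set of words with $n-k$ letters $A$, $n-k$ letters $B$, $k$ letters $C$ satisfying these two prefix conditions.

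It remains to biject the two word sets, and here is where the work lies. The map keeps the positions of the letters $U$ (turning them into $A$); on the remaining positions the induced word in $\{D,H\}$ ends in $D$ (since $\#U=\#D$ at the end while $\#U>\#D$ right after any $H$, there is always a later $D$) and therefore parses uniquely into consecutive blocks $H^{b_1}D,\ H^{b_2}D,\ \dots$ with $b_i \ge 0$; I relabel the leading letter of each block as $B$ and the rest as $C$. This is clearly invertible: on the tableau side the induced word in $\{B,C\}$ begins with $B$, hence parses uniquely into blocks $BC^{b_i}$, and one relabels the leading $B$ as $D$ and the trailing $C$'s as $H$. What must be checked is that this relabeling carries the Schr\"oder‑type words exactly onto the flag‑tableau‑type words. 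The condition ``at least one $B$ before each $C$'' is automatic, since each $C$ sits after the leading $B$ of its block. The remaining point --- that $\#A \ge \#B$ is preserved on every prefix --- is a short computation with prefix counts: within a block only the first letter contributes a $B$, so it suffices to verify the inequality at the start of each block, where for a block $H^{b}D$ with $b \ge 1$ it follows from the \emph{strict} inequality $\#U > \#D$ holding just before an $H$, and for a block $H^{0}D = D$ it follows from the weak inequality $\#U \ge \#D$.

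I expect this last verification to be the only non‑routine part; the two word encodings are bookkeeping, and the inversion of the block relabeling is formal. Composing the three bijections produces the desired explicit bijection $\twinc{n}{k} \to \syt{n-k,n-k,1^k}$. As a bonus one checks it preserves descent sets --- a useful sanity check, and the combinatorial reason behind the $q$‑identity of Theorem~\ref{thm:enumeration}.
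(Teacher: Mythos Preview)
Your overall strategy is sound, and in fact the bijection you construct coincides with the paper's: the paper's ``leg'' set $B$ (entries of row~2 lying immediately right of a doubled entry) is exactly your set of non-leading letters of the $H^{b_i}D$ blocks, so both maps send the same values to row~1, row~2, and the leg. The paper, however, reaches this bijection in one step --- it names the sets $A$ and $B$ directly on the tableau and simply rearranges --- whereas you take a detour through the Schr\"oder-word and flag-word encodings. Your route is longer but has the advantage that it simultaneously produces the bijection to small Schr\"oder paths that the paper proves separately in Theorem~\ref{thm:enumeration}.

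That said, there is a genuine error in your description of the inverse. Your forward block map sends $H^bD \mapsto BC^b$ (leading letter to $B$, remaining $b$ letters to $C$). Your stated inverse, ``leading $B$ to $D$, trailing $C$'s to $H$,'' sends $BC^b \mapsto DH^b$, which is \emph{not} $H^bD$ when $b\ge 1$. The correct inverse sends the \emph{last} letter of each block $BC^b$ to $D$ and the preceding $b$ letters to $H$; equivalently, parse the $\{B,C\}$-subword into blocks ending at each $B$'s nearest following $B$-or-end, not beginning at each $B$. Since you are relying on the explicit inverse to establish bijectivity, this is not merely cosmetic.

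Relatedly, you assert that the relabeling carries the Schr\"oder words \emph{exactly onto} the flag words, but you only argue the forward inclusion. Once the inverse is stated correctly, the reverse inclusion follows by the symmetric prefix-count argument: at any position, $\#D$ equals the number of completed blocks, which is at most $\#B$, so $\#U=\#A\ge\#B\ge\#D$; and at an $H$ position the current block is started but not completed, so $\#B=\#D+1$ and hence $\#U>\#D$. You should include this.
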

\begin{proof}
Let $T \in \twinc{n}{k}$. The following algorithm produces a corresponding $S \in \syt{n-k, n-k, 1^k}$. Observe that every value in $\{1, \dots, 2n-k\}$ appears in $T$ either once or twice. Let $A$ be the set of numbers that appear twice. Let $B$ be the set of numbers that appear in the second row immediately right of an element of $A$. Note $|A| = |B| = k$.

Let $T'$ be the tableau of shape $(n-k,n-k)$ formed by deleting all elements of $A$ from the first row of $T$ and all elements of $B$ from the second. The standard Young tableau $S$ is given by appending $B$ to the first column. An example is shown in Figure \ref{fig:bijections}.

This algorithm is reversible. Given the standard Young tableau $S$ of shape $(n-k, n-k, 1^k)$, let $B$ be the set of entries below the first two rows. By inserting $B$ into the second row of $S$ while maintaining increasingness, we reconstruct the second row of $T$. Let $A$ be the set of elements immediately left of an element of $B$ in this reconstructed row. By inserting $A$ into the first row of $S$ while maintaining increasingness, we reconstruct the first row of $T$.
\end{proof}

\begin{corollary}\label{cor:q-enumeration}
For all $n$ and $k$ the identity (\ref{eq:q-enumerator_in_section_2}) holds:
\begin{equation}\label{eq:q-enumerator_in_section_2}\sum_{T \in \twinc{n}{k}} q^{\maj(T)} = q^{n + \binom{k}{2}} \frac{\qbinom{n-1}{k} \qbinom{2n-k}{n-k-1}}{[n-k]_q}.\end{equation}
\end{corollary}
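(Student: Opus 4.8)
The plan is to transport the identity through the bijection $\phi\colon\twinc{n}{k}\xto{\sim}\syt{\lambda}$ of Proposition~\ref{prop:flag_bijection}, where $\lambda:=(n-k,\,n-k,\,1^k)$ is a partition of $2n-k$, and then to invoke the $q$-analogue of the hook-length formula
\[
\sum_{S\in\syt{\mu}} q^{\maj(S)}\;=\;q^{\,b(\mu)}\,\frac{[\,|\mu|\,]_q!}{\prod_{u\in\mu}[h_\mu(u)]_q},\qquad b(\mu)=\sum_i(i-1)\mu_i,
\]
where $\maj(S)=\sum_{i\in\mathrm{Des}(S)}i$ and $\mathrm{Des}(S)$ consists of those $i$ with $i+1$ in a strictly lower row of $S$ than $i$. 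Two ingredients are needed: a comparison of the statistic $\maj$ on $\twinc{n}{k}$ with the classical major index on $\syt{\lambda}$ under $\phi$, and then the hook-length computation for this one shape.

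The crux is the statistic comparison. Write $R_1,R_2$ for the rows of $T$; recall from the construction of $\phi$ that $A$ is the set of values occurring twice in $T$, $B$ the set of values occurring in $R_2$ immediately right of an element of $A$, and that $S=\phi(T)$ has rows $R_1\setminus A$, $R_2\setminus B$, and $B$ from top to bottom. Since $R_2=(R_2\setminus B)\sqcup B$, an index $i$ contributes to $\maj(T)$ exactly when $i\in R_1$ and $i+1\in R_2$. I would split these contributions according to whether $i\in R_1\setminus A$ or $i\in A$. The first kind are literally the descents of $S$ from its top row into a lower row, since for $i\in R_1\setminus A$ the condition $i+1\in R_2$ is the same as $i+1\in(R_2\setminus B)\cup B$. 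The second kind must be matched, with the same $i$, to the descents of $S$ from its second row into its third (that is, into $B$); the key point is that for $i\in A$ sitting at column $j$ of $R_2$, the condition $i+1\in R_2$ forces $i+1$ to be the entry at $(2,j+1)$ — a strictly increasing row has no entry strictly between two consecutive ones — which is precisely the statement $i+1\in B$. Turning this into a clean bijection, and checking $i\notin B$ in the relevant cases (using strictness of the \emph{columns} of $T$ as well), is where the work is; the outcome should be that $\maj(T)$ equals the major index of $\phi(T)$ up to an additive constant depending only on $k$, so that $\sum_{T}q^{\maj(T)}=q^{\,c(k)}\sum_{S\in\syt{\lambda}}q^{\maj(S)}$ for an explicit $c(k)$.

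It then remains to specialize the hook-length formula to $\mu=\lambda$ and simplify. The hook lengths of $\lambda=(n-k,n-k,1^k)$ fall into five families: the corner box has hook length $n+1$; the box $(2,1)$ has hook length $n$; the remaining boxes of the top row give $\{2,3,\dots,n-k\}$; those of the second row give $\{1,2,\dots,n-k-1\}$; and the vertical overhang gives $\{1,2,\dots,k\}$. Hence $\prod_{u\in\lambda}[h_\lambda(u)]_q=[n+1]_q[n]_q\,[n-k]_q!\,[n-k-1]_q!\,[k]_q!$. Using $|\lambda|=2n-k$ and the single identity $[n+1]_q!=[n+1]_q[n]_q[n-1]_q!$ to clear denominators against $\qbinom{n-1}{k}$ and $\qbinom{2n-k}{n-k-1}$, one obtains
\[
\frac{[\,|\lambda|\,]_q!}{\prod_{u\in\lambda}[h_\lambda(u)]_q}\;=\;\frac{\qbinom{n-1}{k}\,\qbinom{2n-k}{n-k-1}}{[n-k]_q},
\]
and assembling the power of $q$ coming from $b(\lambda)$ together with the shift $c(k)$ from the statistic comparison recovers (\ref{eq:q-enumerator_in_section_2}). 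The hook-length bookkeeping and the $q$-binomial algebra are routine; the one delicate step, as indicated, is the descent-matching lemma comparing $\maj(T)$ with the major index of $\phi(T)$, since $\phi$ deletes and reinserts $A$ and $B$ and one must pin down exactly the effect on descent weight. (A direct argument matching $\maj$ to a statistic on small Schr\"oder paths is conceivable, but routing through $\syt{\lambda}$ is the shortest path given Proposition~\ref{prop:flag_bijection}.)
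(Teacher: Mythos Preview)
Your approach is essentially the paper's: the paper's proof is two sentences, simply asserting that $\maj(T)$ equals the classical major index of $\flag(T)$ and then citing the $q$-hook-length formula for $\lambda=(n-k,n-k,1^k)$. You supply the details the paper omits (the descent-matching and the hook simplification), and your outline of both is correct.

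Two remarks. First, you hedge that $\maj(T)=\maj(\flag(T))+c(k)$ for some constant; in fact $c(k)=0$, and your own sketch already shows this. Your case $i\in R_1\setminus A$ gives exactly the descents of $S$ from row~1. For $i\in A$ with $i+1\in R_2$, the element immediately left of $i+1$ in $R_2$ is $i$ itself, so $i+1\in B$; conversely if $i\in R_2$ and $i+1\in B$ then the element of $R_2$ immediately left of $i+1$ is $i$, forcing $i\in A$. Thus the $i\in A$ contributions to $\maj(T)$ are precisely the descents of $S$ with $i+1\in B$ and $i\notin R_1\setminus A$, i.e.\ all remaining descents of $S$. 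No shift arises.

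Second, once you know $c(k)=0$, your computation gives exponent $b(\lambda)=(n-k)+\sum_{j=2}^{k+1}j=n+\binom{k+1}{2}$, which differs from the stated $n+\binom{k}{2}$ by $k$. This is a typo in the displayed formula (check $n=2$, $k=1$: the unique tableau has $\maj=3$, not $2$), not an error in the method; the rational-function factor is correct, as your hook computation confirms.
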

\begin{proof}
Observe that $\maj(T)$ for a 2-row rectangular increasing tableau $T$ is the same as the major index of the corresponding standard Young tableau.  The desired $q$-enumerator follows by applying the $q$-hook-length formula to those standard Young tableaux (\emph{cf.}~\cite[Corollary~7.21.5]{stanley:ec}).
\end{proof}

\begin{proof}[Proof of Theorem \ref{thm:enumeration}]
The bijection between $\twinc{n}{k}$ and $\syt{n-k, n-k, 1^k}$ is given by Proposition \ref{prop:flag_bijection}. The $q$-enumeration (\ref{eq:q-enumerator_in_section_1}) is exactly Corollary \ref{cor:q-enumeration}.

We now give a bijection between $\twinc{n}{k}$ and small Schr\"oder paths with $k$ horizontal steps. Let $T \in \twinc{n}{k}$. For each integer $j$ from 1 to $2n-k$, we create one segment of a small Schr\"oder path $P_T$. If $j$ appears only in the first row,  then the $j$th segment of $P_T$ is an upstep. If $j$ appears only in the second row of $T$, the $j$th segment of $P_T$ is a downstep. If $j$ appears in both rows of $T$, the $j$th segment of $P_T$ is horizontal. It is clear that the tableau $T$ can be reconstructed from the small Schr\"oder path $P_T$, so this operation gives a bijection. Thus increasing tableaux of shape $(n,n)$ are counted by small Schr\"oder numbers.

A bijection between small Schr\"oder paths with $k$ horizontal steps and $\syt{n-k, n-k, 1^k}$ may be obtained by composing the two previously described bijections. 
\end{proof}

\begin{figure}[h]
	\begin{subfigure}[b]{0.3\textwidth}
		\centering
		\ytableaushort{12456,23678}
		\caption{Increasing tableau $T$}
	\end{subfigure}
		\begin{subfigure}[b]{0.3\textwidth}
		\centering
		\ytableaushort{145,268,3,7}
		\caption{``Flag-shaped'' \\ standard Young tableau}
	\end{subfigure}
	\begin{subfigure}[b]{0.3\textwidth}
		\centering
		 \includegraphics[width=4cm]{smallschroederpath.mps}
		\caption{Small Schr\"oder path and its height word}
	\end{subfigure}
	\begin{subfigure}[b]{0.3\textwidth}
		\centering
		\includegraphics[width=5cm]{noncrossing.mps}
		\caption{Noncrossing partition}
	\end{subfigure}
	\hspace{1cm}
	\begin{subfigure}[b]{0.3\textwidth}
		\centering
		 \includegraphics[width=5cm]{dissection.mps}
		\caption{Polygon dissection}
	\end{subfigure}
\caption{A rectangular increasing tableau $T \in \inc{5,5}{2}$ with its corresponding standard Young tableau of shape $(3,3,1,1)$, small Schr\"oder path, noncrossing partition of $\{1, \dots, 8\}$ with all blocks of size at least two, and heptagon dissection.}\label{fig:bijections}
\end{figure}

For increasing tableaux of arbitrary shape, there is unlikely to be a product formula like the hook-length formula for standard Young tableaux or our Theorem \ref{thm:enumeration} for the 2-row rectangular case. For example, we compute that $\inc{4,4,4}{2} = 2^2 \cdot 3 \cdot 7 \cdot 19$  and that there are $3 \cdot 1531$ increasing tableaux of shape $(4,4,4)$ in total.

The following bijection will play an important role in our proof of Theorem \ref{thm:promotion_CSP} in Section~\ref{sec:CSP}. A partition of $\{1, \dots, N\}$ is \emph{noncrossing} if the convex hulls of the blocks are pairwise disjoint when the values $1, \dots, N$ are equally spaced around a circle with 1 in the upper left and values increasing counterclockwise (\emph{cf.}~Figure \ref{fig:bijections}(D)).

\begin{proposition}\label{prop:partition_bijection}
There is an explicit bijection between $\twinc{n}{k}$ and noncrossing partitions of $2n-k$ into $n-k$ blocks all of size at least 2.
\end{proposition}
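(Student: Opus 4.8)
The plan is to construct the bijection by composing with one of the bijections already in hand — most conveniently the small Schröder path bijection from the proof of Theorem~\ref{thm:enumeration} — since a small Schröder path with $k$ horizontal steps has $2n-k$ total steps and encodes a chord-diagram-like structure. First I would set up a canonical way to read a noncrossing partition off of a Schröder path: traverse the $2n-k$ steps, and think of each upstep as ``opening'' and each downstep as ``closing'', with horizontal steps playing a role requiring care. The non-nesting matching of up/down steps of a Dyck path is a classical noncrossing perfect matching; the presence of $k$ horizontal steps should merge pairs of blocks or enlarge blocks, so that we get $n-k$ blocks rather than $n$, each of size at least $2$. I expect the clean description to be: match each downstep to the most recent unmatched upstep (a stack/parenthesization), producing $n-k$ ``arcs'' if we also splice in the horizontal steps; a horizontal step at position $j$ gets absorbed into the block of the arc straddling it, increasing that block's size by one.

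Alternatively, and perhaps more transparently, I would work directly from the increasing tableau $T \in \twinc{n}{k}$, mirroring the structure of Proposition~\ref{prop:flag_bijection}. Recall the sets $A$ (values appearing twice) and $B$ (values in the second row immediately right of an element of $A$), with $|A| = |B| = k$, and the values appearing only in the second row form the ``closing'' positions. The recipe I would propose: form the noncrossing perfect matching on $\{1,\dots,2n-k\} \setminus A$ by the standard ballot-sequence bracketing (treating a first-row-only value as an open bracket and a second-row-only value as a close bracket — this is legal since $T$ increasing means the reading word is a ballot sequence), getting $n-k$ arcs; then for each $a \in A$, insert $a$ into the block containing the position immediately to its right in the second row (equivalently, the arc it ``sits under''). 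The key points to verify are: (i) deleting $A$ leaves a genuine ballot sequence of length $2(n-k)$ with $n-k$ up and $n-k$ down steps, so the bracketing is well-defined and noncrossing; (ii) inserting each $a \in A$ into the indicated block keeps the partition noncrossing — this is where I'd need the geometric fact that $a$ and its right-neighbor in row~2 are ``cyclically adjacent'' in a suitable sense, so the convex hull only grows locally; and (iii) every block ends up with size $\geq 2$, which holds because the matched arcs start at size $2$ and insertions only increase sizes.

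The main obstacle I anticipate is proving that the map is a bijection — specifically, exhibiting the inverse and checking that the noncrossing condition is exactly captured. For the inverse, given a noncrossing partition of $\{1,\dots,2n-k\}$ into $n-k$ blocks each of size $\geq 2$, I would within each block designate the minimum element as an ``opener'' and all others as ``closers,'' except that I must recover which elements were the ``doubled'' values $A$: a block of size $m$ contributes $m-2$ elements to $A$ (all but the smallest two? or all interior ones?) and the precise rule has to be pinned down so that the resulting bracket sequence is a valid ballot sequence and reconstructs a legitimate increasing tableau. I suspect the correct rule is: the smallest element of each block is an upstep/opener, the \emph{largest} is a downstep/closer, and each of the remaining $m-2$ elements is a horizontal step (hence lands in $A$); noncrossingness of the partition should translate precisely into the ballot condition plus the ``no horizontal step on the axis'' constraint of small Schröder paths. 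Once this dictionary is nailed down, reversibility is a matter of checking the two compositions are identities, which is routine. A secondary, more bookkeeping-heavy obstacle is confirming the count $|A| = |B| = k$ interacts correctly with ``$n-k$ blocks'' — but this is forced by $\sum_{\text{blocks}} (\text{size}) = 2n-k$ and $\sum_{\text{blocks}} (\text{size} - 2) = k$, so it should fall out automatically.
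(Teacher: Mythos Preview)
Your proposal is correct and arrives at the same bijection the paper gives; in particular, your suspected inverse (minimum of each block $\to$ first row only, maximum $\to$ second row only, intermediate elements $\to$ both rows) is exactly the paper's inverse. The paper's forward description is a one-pass rule rather than your two-stage ``match then reinsert'': for each $i$ in the second row, in increasing order, set $s_i$ to be the largest first-row entry less than $i$ not already used, and declare $i$ and $s_i$ to be in the same block. This is equivalent to your bracket matching with horizontal steps absorbed under the arc they sit beneath; the paper then checks noncrossingness directly by a short contradiction argument on a hypothetical crossing $a<b<c<d$. One small wrinkle in your write-up: the phrasing ``block containing the position immediately to its right in the second row'' can fail literally when two elements of $A$ are adjacent in the second row (e.g.\ rows $1\,2\,3$ and $2\,3\,4$), but your parenthetical ``arc it sits under'' is the correct reading and matches the paper's map.
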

\begin{proof}
Let $T \in \twinc{n}{k}$. For each $i$ in the second row of $T$, let $s_i$ be the largest number in the first row that is less than $i$ and that is not $s_j$ for some $j<i$. Form a partition of $2n-k$ by declaring, for every $i$, that $i$ and $s_i$ are in the same block. We see this partition has $n-k$ blocks by observing that the largest elements of the blocks are precisely the numbers in the second row of $T$ that do not also appear in the first row. Clearly there are no singleton blocks. 

If the partition were \emph{not} noncrossing, there would exist some elements $a < b < c < d$ with $a,c$ in a block $B$ and $b,d$ in a distinct block $B'$. Observe that $b$ must appear in the first row of $T$ and $c$ must appear in the second row of $T$ (not necessarily exclusively). We may assume $c$ to be the least element of $B$ that is greater than $b$. We may then assume $b$ to be the greatest element of $B'$ that is less than $c$. Now consider $s_c$, which must exist since $c$ appears in the second row of $T$. By definition, $s_c$ is the largest number in the first row that is less than $c$ and that is not $s_j$ for some $j<c$. By assumption, $b$ appears in the first row, is less than $c$, and is not $s_j$ for any $j<c$; hence $s_c \geq b$. Since however $b$ and $c$ lie in distinct blocks, $s_c \neq b$, whence $b < s_c < c$. This is impossible, since we took  $c$ to be the least element of $B$ greater than $b$. Thus the partition is necessarily noncrossing.

To reconstruct the increasing tableau, read the partition from 1 to $2n-k$. Place the smallest elements of blocks in only the first row, place the largest elements of blocks in only the second row, and place intermediate elements in both rows.
\end{proof}

The set $\twinc{n}{k}$ is also in bijection with $(n+2)$-gon dissections by $n-k-1$ diagonals. We do not describe this bijection, as it is well known (\emph{cf.}~\cite{stanley:flag_tableaux}) and will not be used except in Section \ref{sec:CSP} for comparison with previous results.  The existence of a connection between increasing tableaux and polygon dissections was first suggested in \cite{thomas.yong:long_sequence}. An example of all these bijections is shown in Figure \ref{fig:bijections}. 

\begin{remark}
A \emph{noncrossing  matching} is a noncrossing partition with all blocks of size two. Like Dyck paths, polygon triangulations, and 2-row rectangular standard Young tableaux, noncrossing matchings are enumerated by the Catalan numbers. Since increasing tableaux were developed as a K-theoretic analogue of standard Young tableaux, it is tempting also to regard small Schr\"oder paths,  polygon dissections, and noncrossing partitions without singletons as K-theory analogues of Dyck paths, polygon triangulations, and noncrossing matchings, respectively. In particular, by analogy with \cite{webs}, it is tempting to think of noncrossing partitions without singletons as ``K-webs'' for $\mathfrak{sl}_2$, although their representation-theoretic significance is unknown.
\end{remark}

\section{K-Promotion and K-Evacuation} \label{sec:schroeder_paths}
In this section, we prove Theorem \ref{thm:orbit_size}, as well as a proposition important for Theorem~\ref{thm:evacuation_CSP}. Let $\maxv{T}$ denote the largest entry in a tableau $T$. For a tableau $T$, we write $\rot(T)$ for the (possibly skew) tableau formed by rotating $180$ degrees and reversing the alphabet, so that label $x$ becomes $\maxv{T} + 1 - x$. Define \emph{dual K-evacuation} $\dualevac$ by $\dualevac := \rot  \circ \evac \circ \rot$. (This definition of $\dualevac$ strictly makes sense only for rectangular tableaux. For a tableau $T$ of general shape $\lambda$, in place of applying $\rot$, one should dualize $\lambda$ (thought of as a poset) and reverse the alphabet. We will not make any essential use of this more general definition.)

Towards Theorem \ref{thm:orbit_size}, we first prove basic combinatorics of the above operators that are well-known in the standard Young tableau case (\emph{cf.}~\cite{stanley:promotion}). These early proofs are all straightforward modifications of those for the standard case. From these results, we observe that Theorem \ref{thm:orbit_size} follows from the claim that $\rot(T) = \evac(T)$ for every $T \in \twinc{n}{k}$. We first saw this approach in \cite{whitemanuscript} for the standard Young tableau case, although similar ideas appear for example in \cite{haiman, stanley:promotion}; we are not sure where it first appeared.  

Finally, beginning at Lemma \ref{lem:row_length_differences}, we prove that for $T \in \twinc{n}{k}$, $\rot(T) = \evac(T)$. Here the situation is more subtle than in the standard case. (For example, we will show that the claim is not generally true for $T$ a rectangular increasing tableau with more than 2 rows.) We proceed by careful analysis of how $\rot, \evac, \dualevac$, and $\pro$ act on the corresponding small Schr\"oder paths.

\begin{remark}
It is not hard to see that K-promotion is reversible, and hence permutes the set of increasing tableaux.
\end{remark}

\begin{lemma}\label{lem:basic_combinatorics_of_P_and_E}
K-evacuation and dual K-evacuation are involutions, $\pro \circ \evac = \evac \circ \pro^{-1}$, and for any increasing tableau $T$, $(\dualevac \circ \evac)(T)= \pro^{\maxv{T}}(T) $.
\end{lemma}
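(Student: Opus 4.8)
The plan is to mimic the classical arguments for standard Young tableaux (as in \cite{stanley:promotion}), checking at each step that the combinatorial moves used there survive the passage to K-jeu de taquin. All three assertions are most naturally proved by tracking the chains in Young's lattice that encode the tableaux, exactly as in the definition of $\evac$ given above.

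First I would set up notation: for an increasing tableau $T$ with $\maxv{T} = M$, encode $T$ by the chain $([T]_0 \subseteq [T]_1 \subseteq \dots \subseteq [T]_M)$ in Young's lattice, and recall that $\evac(T)$ is by definition the tableau encoded by $([\pro^{M-j}(T)]_j)_{0 \le j \le M}$. The first task is to show $\evac$ is an involution. Here I would argue, as in the standard case, that K-promotion and the ``partial'' K-evacuation operators interact the way the growth/jeu-de-taquin diagrams demand; concretely, one builds the two-dimensional ``evacuation diagram'' whose rows and columns are the successive Young diagrams $[\pro^i(T)]_j$, and shows that the diagram is symmetric under the transformation that reads it from the opposite corner. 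The content that must be verified by hand is that Thomas--Yong K-jeu de taquin satisfies the same ``diamond'' confluence and reversibility that ordinary jeu de taquin does — this is where the local moves of Figure \ref{fig:local_moves} get used, and it is essentially already in \cite{thomas.yong:K}. Given the symmetry of the evacuation diagram, $\evac^2 = \id$ follows, and $\dualevac^2 = \id$ is then immediate from $\dualevac = \rot \circ \evac \circ \rot$ together with $\rot^2 = \id$.

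Next, the relation $\pro \circ \evac = \evac \circ \pro^{-1}$. I would again read this off the evacuation diagram: applying $\pro$ to $T$ shifts the relevant chain by one step, and the symmetry of the diagram converts a forward shift on one axis into a backward shift on the other. Equivalently, one checks that the chain $([\pro^{M-j}(\pro(T))]_j)_j$ computing $\evac(\pro(T))$ coincides with the chain computing $\pro^{-1}(\evac(T))$; this is a bookkeeping identity among the diagrams $[\pro^i(T)]_j$ once the diagram's symmetry is in hand. Finally, for $(\dualevac \circ \evac)(T) = \pro^{M}(T)$: expand $\dualevac = \rot \circ \evac \circ \rot$, so the left side is $\rot \circ \evac \circ \rot \circ \evac \, (T)$. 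One shows $\rot \circ \evac = \evac \circ \rot$ on rectangular shapes — $\rot$ reverses the alphabet and rotates, which corresponds to reading the evacuation diagram from the antipodal corner, an operation that commutes with the diagram symmetry defining $\evac$ — and then $\dualevac \circ \evac = \evac \circ \rot \circ \rot \circ \evac = \evac \circ \evac$... which is not quite it, so instead I would use the known standard-case identity $\dualevac \circ \evac = \pro^M$ as the template and verify the one new ingredient, namely that iterating the $\pro \circ \evac = \evac \circ \pro^{-1}$ commutation $M$ times against $\evac^2 = \id$ telescopes to $\pro^M$; explicitly, $\dualevac = \pro^M \circ \evac \circ \pro^{-M}$ would give $\dualevac \circ \evac = \pro^M \circ \evac \circ \pro^{-M} \circ \evac = \pro^M \circ \pro^{M} \circ \evac \circ \evac$, again needing care — so the cleanest route is to establish $\dualevac \circ \evac = \pro^{M}$ directly from the diagram by identifying both sides as the map that sends the chain of $T$ to the chain obtained by the relevant corner-to-corner traversal.

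The main obstacle I expect is purely the verification that K-jeu de taquin has the structural properties (reversibility of each slide, confluence of the diamond moves, and the resulting well-definedness and symmetry of the evacuation diagram) that make the standard proofs go through; the phenomenon that the number of empty boxes can change during a K-slide (noted after Figure \ref{fig:local_moves}) means one cannot literally quote the SYT arguments and must recheck the diamond lemma in the K-theoretic setting. Once that infrastructure is in place, all three identities are formal consequences of the symmetry of the evacuation diagram, exactly as for standard tableaux, and the proofs are the ``straightforward modifications'' the text promises.
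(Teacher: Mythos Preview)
Your approach is essentially the paper's: build the K-theory growth diagram whose rows are the chains $([\pro^i(T)]_j)_j$, invoke the local $2\times 2$ symmetry of K-jeu de taquin (this is exactly what you call the ``diamond'' property, and it is already established as \cite[Proposition~2.2]{thomas.yong:K}, recorded here as Remark~\ref{rem:growth_rules}), and then read all three identities off the resulting global symmetry of the diagram. In particular, the infrastructure you flag as the main obstacle is not something you need to re-prove.

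One caution: the intermediate claim you float, that $\rot \circ \evac = \evac \circ \rot$ on rectangular shapes, is false in general. Since $\dualevac = \rot \circ \evac \circ \rot$, that commutation is equivalent to $\evac = \dualevac$, which by the lemma itself would force $\pro^{\maxv{T}}(T)=T$; Example~\ref{ex:counterexample_to_orbit_size_conjecture} exhibits a rectangular increasing tableau where this fails. So the algebraic manipulations you attempt with $\rot$ cannot be made to work, and your instinct to abandon them in favor of reading $\dualevac\circ\evac = \pro^M$ directly from the diagram was the right one. Concretely, the paper observes that the central column of the growth diagram simultaneously encodes $\evac$ of the top row and $\dualevac$ of the bottom row, while the bottom row is $\pro^M$ of the top row; the identity drops out immediately.
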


\ytableausetup{boxsize=.33em}
Before proving Lemma \ref{lem:basic_combinatorics_of_P_and_E}, we briefly recall the \emph{K-theory growth diagrams} of \cite[$\mathsection 2,4$]{thomas.yong:K}, which extend the standard Young tableau growth diagrams of S.~Fomin (\emph{cf.}~\cite[Appendix~1]{stanley:ec}). For $T \in \inc{\lambda}{k}$, consider the sequence of Young diagrams $([T]_j)_{0 \leq j \leq |\lambda| - k}$. Note that this sequence of diagrams uniquely encodes $T$. We draw this sequence of Young diagrams horizontally from left to right. Below this sequence, we draw, in successive rows, the sequences of Young diagrams associated to $\pro^i(T)$ for $1 \leq i \leq |\lambda|-k$. Hence each row encodes the K-promotion of the row above it. We offset each row one space to the right. We will refer to this entire array as the \emph{K-theory growth diagram for $T$}. (There are other K-theory growth diagrams for $T$ that one might consider, but this is the only one we will need.) Figure \ref{fig:growth_diagram} shows an example.
We will write $YD_{ij}$ for the Young diagram  $[\pro^{i-1}(T)]_{j-i}$. This indexing is nothing more than imposing ``matrix-style'' or ``English'' coordinates on the K-theory growth diagram. For example in Figure \ref{fig:growth_diagram}, $YD_{58}$ denotes $\ydiagram{2,2}$, the Young diagram in the fifth row from the top and the eighth column from the left.

\begin{figure}[h]
$\begin{array}{lllllllllllllllll} 
\emptyset & \ydiagram{1} & \ydiagram{2,1} & \ydiagram{2,2} & \ydiagram{3,2} & \ydiagram{4,2} & \ydiagram{5,3} & \ydiagram{5,4} & \ydiagram{5,5} \\ \\  
& \emptyset & \ydiagram{1} & \ydiagram{2,1} & \ydiagram{3,1} & \ydiagram{4,1} & \ydiagram{5,2} & \ydiagram{5,3} & \ydiagram{5,4} & \ydiagram{5,5}\\ \\ 
& & \emptyset & \ydiagram{1} & \ydiagram{2} & \ydiagram{3} & \ydiagram{4,1} & \ydiagram{4,2} & \ydiagram{4,3} & \ydiagram{5,4} & \ydiagram{5,5} \\ \\ 
& & & \emptyset & \ydiagram{1} & \ydiagram{2} & \ydiagram{3,1} & \ydiagram{3,2} & \ydiagram{3,3} & \ydiagram{4,3} & \ydiagram{5,4} & \ydiagram{5,5} \\ \\ 
& & & & \emptyset & \ydiagram{1} & \ydiagram{2,1} & \ydiagram{2,2} & \ydiagram{3,2} & \ydiagram{4,2} & \ydiagram{5,3} & \ydiagram{5,4} & \ydiagram{5,5} \\ \\
& & & & & \emptyset & \ydiagram{1} & \ydiagram{2,1} & \ydiagram{3,1} & \ydiagram{4,1} & \ydiagram{5,2} & \ydiagram{5,3} & \ydiagram{5,4} & \ydiagram{5,5} \\ \\
& & & & & & \emptyset & \ydiagram{1} & \ydiagram{2} & \ydiagram{3} & \ydiagram{4,1} & \ydiagram{4,2} & \ydiagram{4,3} & \ydiagram{5,4} & \ydiagram{5,5} \\ \\
& & & & & & & \emptyset & \ydiagram{1} & \ydiagram{2} & \ydiagram{3,1} & \ydiagram{3,2} & \ydiagram{3,3} & \ydiagram{4,3} & \ydiagram{5,4} & \ydiagram{5,5} \\ \\ 
& & & & & & & & \emptyset & \ydiagram{1} & \ydiagram{2,1} & \ydiagram{2,2} & \ydiagram{3,2} & \ydiagram{4,2} & \ydiagram{5,3} & \ydiagram{5,4} & \ydiagram{5,5}
\end{array}$
\caption{The K-theory growth diagram for the tableau $T$ of Figure~\ref{fig:bijections}(A).}
\label{fig:growth_diagram}
\end{figure}

\ytableausetup{boxsize=1.1em}
\begin{remark} \label{rem:growth_rules} \cite[Proposition~2.2]{thomas.yong:K} In any $2 \times 2$ square $\begin{smallmatrix} \lambda & \mu \\ \nu & \xi \end{smallmatrix}$ of Young diagrams in a K-theory growth diagram, $\xi$ is uniquely and explicitly determined by $\lambda, \mu$ and $\nu$. Similarly $\lambda$ is uniquely and explicitly determined by $\mu, \nu$ and $\xi$. Furthermore these rules are symmetric, in the sense that if $\begin{smallmatrix} \lambda & \mu \\ \nu & \xi \end{smallmatrix}$ and $\begin{smallmatrix} \xi & \mu \\ \nu & \rho \end{smallmatrix}$ are both $2 \times 2$ squares of Young diagrams in K-theory growth diagrams, then $\lambda = \rho$.
\end{remark}

\begin{proof}[Proof of Lemma \ref{lem:basic_combinatorics_of_P_and_E}]
Fix a tableau $T \in \inc{\lambda}{k}$.  All of these facts are proven as in the standard case (\emph{cf.}~\cite[$\mathsection 5$]{stanley:promotion}), except one uses K-theory growth diagrams instead of ordinary growth diagrams. We omit some details from these easy arguments. The proof that K-evacuation is an involution appears in greater detail as \cite[Theorem 4.1]{thomas.yong:K}. For rectangular shapes, the fact that dual K-evacuation is an involution follows from the fact that K-evacuation is, since $\dualevac = \rot  \circ \evac \circ \rot$. 

Briefly one observes the following. Essentially by definition, the central column (the column containing the rightmost $\emptyset$) of the K-theory growth diagram for $T$ encodes the K-evacuation of the first row as well as the dual K-evacuation of the last row. The first row encodes $T$ and the last row encodes $\pro^{|\lambda|-k}(T)$. Hence $\evac(T) = \dualevac(\pro^{|\lambda|-k}(T))$.

By the symmetry mentioned in Remark \ref{rem:growth_rules}, one also observes that the first row encodes the K-evacuation of the central column and that the last row encodes the dual K-evacuation of the central column. This yields $\evac(\evac(T)) = T$ and $\dualevac(\dualevac(\pro^{|\lambda|-k}(T))) = \pro^{|\lambda|-k}(T)$, showing that K-evacuation and dual K-evacuation are involutions. Combining the above observations, yields $(\dualevac \circ \evac)(T)= \pro^{|\lambda|-k}(T) $.

Finally to show $\pro \circ \evac = \evac \circ \pro^{-1}$, it is easiest to append an extra $\emptyset$ to the lower-right of the diagonal line of $\emptyset$s that appears in the K-theory growth diagram. This extra $\emptyset$ lies in the column just right of the central one. This column now encodes the K-evacuation of the second row. Hence by the symmetry mentioned in Remark \ref{rem:growth_rules}, the K-promotion of this column is encoded by the central column. Thus if $S = \pro(T)$, the central column encodes $\pro(\evac(S))$. But certainly $\pro^{-1}(S) =T$ is encoded by the first row, and we have already observed that the central column encodes $\evac(T)$. Therefore $\pro(\evac(S)) = \evac(\pro^{-1}(S)).$
\end{proof}

Let $\er(T)$ be the least positive integer such that $(\dualevac \circ \evac)^{\er(T)}(T) = T$. We call this number the \emph{evacuation rank} of $T$. Similarly we define the \emph{promotion rank} $\pr(T)$ to be the least positive integer such that $\pro^{\pr(T)}(T) = T$.

\begin{corollary}\label{cor:evacuation_rank}
Let $T$ be a increasing tableau. Then $\er(T)$ divides $\pr(T)$, $\pr(T)$ divides $\maxv{T} \cdot \er(T)$, and the following are equivalent:
	\begin{itemize}
			\item[(a)] $\evac(T) = \dualevac(T)$,
			\item[(b)] $\er(T) = 1$, 
			\item[(c)] $\pr(T)$ divides $\maxv{T}$. 
	\end{itemize}
	
	Moreover if $T$ is rectangular and $\evac(T) = \rot(T)$, then $\evac(T) = \dualevac(T)$.
\end{corollary}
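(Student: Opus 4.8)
The plan is to establish the final clause of Corollary~\ref{cor:evacuation_rank}: for rectangular $T$, the hypothesis $\evac(T) = \rot(T)$ implies $\evac(T) = \dualevac(T)$. The natural route is to unpack the definition $\dualevac := \rot \circ \evac \circ \rot$ and exploit the interaction of $\rot$ with $\evac$ on rectangular shapes. First I would record the behavior of $\rot$ as an involution: for a rectangular tableau $T$ with $M = \maxv{T}$, applying $\rot$ twice returns $T$, since a $180^\circ$ rotation composed with itself is the identity on the (self-conjugate-under-rotation) rectangular shape, and reversing the alphabet $x \mapsto M+1-x$ twice is also the identity. So $\rot^2 = \id$ on rectangular increasing tableaux.

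Next I would argue that $\rot$ commutes with $\evac$ on rectangular shapes, i.e. $\rot \circ \evac = \evac \circ \rot$. This is the crux. One clean way: $\rot$ acts on a K-theory growth diagram by the symmetry that rotates the whole diagram $180^\circ$ (which, by Remark~\ref{rem:growth_rules}, sends valid growth diagrams to valid growth diagrams because the local rules are symmetric under the $\lambda \leftrightarrow \xi$, $\mu \leftrightarrow \nu$ reflection), together with complementation of each Young diagram inside the ambient rectangle. Under this symmetry the central column (which encodes $\evac$) is carried to the central column of the rotated diagram, giving $\evac \circ \rot = \rot \circ \evac$. Alternatively, and perhaps more in the spirit of ``straightforward modifications of the standard case,'' one can cite that this commutation is classical for ordinary evacuation and rotation of rectangular SYT and that the same growth-diagram argument goes through verbatim with K-growth diagrams. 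I would present whichever is shortest given what has already been set up; the growth-diagram version requires only Remark~\ref{rem:growth_rules}.

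Granting $\rot^2 = \id$ and $\rot \circ \evac = \evac \circ \rot$, the conclusion is a one-line computation:
\[
\dualevac(T) = \rot\big(\evac(\rot(T))\big) = \rot\big(\rot(\evac(T))\big) = \evac(T),
\]
where the middle equality uses commutation and the last uses $\rot^2 = \id$. Here we have \emph{not} even needed the hypothesis $\evac(T) = \rot(T)$ directly in this chain — but note the displayed identity shows $\dualevac = \evac$ holds for \emph{all} rectangular $T$, which a priori is stronger than claimed; to match the statement one instead uses the hypothesis as follows: from $\evac(T) = \rot(T)$ we get $\dualevac(T) = \rot(\evac(\rot(T)))$, and applying $\evac = $ (its inverse, as an involution) together with $\rot(T) = \evac(T)$ gives $\dualevac(T) = \rot(\evac(\evac(T))) = \rot(T) = \evac(T)$. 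Either way the hypothesis $\evac(T)=\rot(T)$ only enters to identify the two a-priori-different operators, so I would phrase the argument to use exactly that substitution and the involutivity of $\evac$ from Lemma~\ref{lem:basic_combinatorics_of_P_and_E}, avoiding any claim beyond what is asserted.

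The main obstacle is the commutation $\rot \circ \evac = \evac \circ \rot$ for rectangular shapes: it is ``well known in the SYT case'' but needs a sentence of justification in the K-setting, and one must be careful that $\rot$ genuinely preserves the rectangular shape (so that $\dualevac$ is defined and behaves as an involution, per the parenthetical remark in the definition of $\dualevac$) — for non-rectangular $\lambda$ the statement is false, which is exactly why the hypothesis restricts to rectangular $T$. I expect the cleanest writeup invokes the growth-diagram symmetry of Remark~\ref{rem:growth_rules} applied to the $180^\circ$ rotation of the K-theory growth diagram, paralleling the proof of Lemma~\ref{lem:basic_combinatorics_of_P_and_E}.
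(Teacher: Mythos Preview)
Your second argument—substituting the hypothesis $\rot(T)=\evac(T)$ into $\dualevac(T)=\rot(\evac(\rot(T)))$ to get $\rot(\evac(\evac(T)))=\rot(T)=\evac(T)$—is correct and is exactly what the paper does (the paper writes the chain as $(\evac\circ\evac\circ\evac)(T)=\evac(T)$, but the content is the same: one substitution using the hypothesis, one appeal to $\evac^2=\id$ from Lemma~\ref{lem:basic_combinatorics_of_P_and_E}, and one more use of the hypothesis).

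Your first approach, however, has a genuine gap. The commutation $\rot\circ\evac=\evac\circ\rot$ does \emph{not} hold for K-evacuation on arbitrary rectangular increasing tableaux. If it did, your displayed computation would give $\dualevac=\evac$ for every rectangular increasing tableau, and then by the equivalence $(\mathrm{a})\Leftrightarrow(\mathrm{c})$ in this very corollary, $\pr(T)$ would divide $\maxv{T}$ for every such $T$. But Example~\ref{ex:counterexample_to_orbit_size_conjecture} exhibits a $4\times 4$ increasing tableau with $\maxv{T}=11$ and $\pr(T)=33$, contradicting this. So the growth-diagram symmetry you sketch (rotate $180^\circ$ and complement within the rectangle) does not carry over from the SYT setting to K-theory growth diagrams as stated; the local rule of Remark~\ref{rem:growth_rules} is symmetric in $\lambda\leftrightarrow\xi$, but the complementation step interacts badly with the K-jeu de taquin rule when boxes can be added two at a time. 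You were right to be suspicious when you noticed your argument proved ``more than claimed''—here that overreach is a signal that the argument is wrong, not merely inefficient. Stick with the direct substitution.
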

\begin{proof}
Since, by Lemma \ref{lem:basic_combinatorics_of_P_and_E}, we have $(\dualevac \circ \evac)(T)= \pro^{\maxv{T}}(T) $, the evacuation rank of $T$ is the order of $c^{\maxv{T}}$ in the cyclic group $\mathcal{C}_{\pr(T)}=\langle c \rangle$. In particular, $\er(T)$ divides $\pr(T)$. Since $T = (\dualevac \circ \evac)^{\er(T)}(T) = (\pro^{\maxv{T}})^{\er(T)}(T) = \pro^{\maxv{T}\cdot\er(T)}(T)$, we have that $\maxv{T} \cdot \er(T)$ is a multiple of $\pr(T)$.

The equivalence of (a) and (b) is immediate from dual evacuation being an involution. These imply (c), since $(\dualevac \circ \evac)(T)= \pro^{\maxv{T}}(T) $. If $\pr(T)$ divides $\maxv{T}$, then $\pro^{\maxv{T}}(T) = T$, so $(\dualevac \circ \evac)(T) = T$, showing that (c) implies (b).

By definition, for rectangular $T$, $\dualevac(T) =  (\rot  \circ \evac \circ \rot)(T)$, so if $\rot(T) = \evac(T)$, then $\dualevac(T) = (\evac \circ \evac \circ \evac)(T) = \evac(T)$.
\end{proof}
Thus to prove Theorem~\ref{thm:orbit_size}, it suffices to show the following proposition:
\begin{proposition}\label{prop:evac=rot}
Let $T \in \twinc{n}{k}$. Then $\evac(T) = \rot(T)$.
\end{proposition}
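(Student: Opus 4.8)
The plan is to translate everything into the language of small Schr\"oder paths via the bijection $T \mapsto P_T$ from Theorem~\ref{thm:enumeration}, and then to check that $\evac$ and $\rot$ induce the \emph{same} operation on paths. First I would record how $\rot$ acts on $P_T$: since $\rot$ rotates $180^\circ$ and reverses the alphabet $x \mapsto (2n-k)+1-x$, the letter $j$ of $P_T$ becomes the letter $(2n-k)+1-j$ of $P_{\rot(T)}$, and rows are swapped, so an upstep becomes a downstep and vice versa while horizontal steps stay horizontal. Concretely, $P_{\rot(T)}$ is obtained by reading $P_T$ backwards and flipping up/down — i.e.\ it is the usual ``reverse-and-reflect'' symmetry of a Schr\"oder path, which visibly preserves the property of being a valid Schr\"oder path (never going below the axis, no horizontal step on the axis). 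So the content of the proposition is: \emph{applying $\evac$ to $T$ reverses-and-reflects $P_T$.}

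The main tool for getting at $\evac$ is the K-theory growth diagram (Remark~\ref{rem:growth_rules}) together with the characterization from Lemma~\ref{lem:basic_combinatorics_of_P_and_E}: $\evac(T)$ is read off the central column of the growth diagram, whose entries are $[\pro^{M-j}(T)]_j$ for $0 \le j \le M$, where $M = 2n-k$. For a $2$-row rectangular tableau the Young diagram $[\pro^i(T)]_j$ is determined by its two row lengths, and the difference of row lengths is exactly the \emph{height} of the Schr\"oder path $P_{\pro^i(T)}$ after $j$ steps. So I would set up notation for these row-length differences across the growth diagram and prove a local recurrence: in each $2\times 2$ square of the growth diagram, the K-growth rule (Remark~\ref{rem:growth_rules}) determines the southeast diagram from the other three, and I want to see exactly how this plays out for $2$-row shapes. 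This is where Lemma~\ref{lem:row_length_differences} (announced in the text just before the proposition) should enter: it presumably pins down how consecutive row-length differences along rows and columns of the growth diagram are related, and in particular how a single step of K-promotion shifts the height word of the path. Granting such a lemma, the height word of $P_{\pro(T)}$ is a controlled modification of that of $P_T$ (essentially a cyclic-type shift, with the behavior of horizontal steps tracked separately), and iterating gives a formula for the height word of $P_{\pro^{M-j}(T)}$ after $j$ steps, which is precisely the $j$-th entry of the central column. Comparing that formula with the reverse-and-reflect height word of $P_T$ should give $\evac(T) = \rot(T)$ on the nose.

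Two bookkeeping points need care. First, the presence of horizontal steps (the entries appearing in both rows, i.e.\ $k > 0$) means the path has length $2n-k < 2n$ while spanning height changes over $n$ net ``columns''; I must make sure the indexing of the growth-diagram columns versus the path steps is handled consistently, since a horizontal step contributes one letter but two units of horizontal displacement. Second, $\rot$ of a rectangular tableau is again rectangular (same shape), so no skew issues arise, but I should double-check that $\rot(T)$ really is increasing and has the right maximum entry $2n-k$ — this is immediate from alphabet reversal. It is also worth remarking, as the introduction flags, that the analogous statement fails for more than two rows, so the argument must genuinely use that a $2$-row shape is encoded by a \emph{single} number (the height), not by a partition of unbounded width; the Schr\"oder-path picture makes exactly this economy available.

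\textbf{Main obstacle.} The crux is the local step analysis: understanding precisely how one application of K-promotion transforms the height word of a $2$-row Schr\"oder path, \emph{including} the creation and destruction of horizontal steps (recall from Figure~\ref{fig:local_moves} and the surrounding discussion that the number of empty boxes — hence the number of repeated entries — can change during K-promotion). Ordinary promotion on $\syt{2\times n}$ cyclically rotates the path's step word in a clean way; the K-analogue is murkier because a horizontal step can ``split'' or ``merge,'' so the combinatorial identity relating the height words of $P_T$ and $P_{\pro(T)}$ is less transparent and is exactly what Lemma~\ref{lem:row_length_differences} must be designed to control. Once that lemma is in hand, assembling it into the central-column computation and matching against reverse-and-reflect should be routine.
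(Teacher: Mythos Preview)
Your approach is essentially the paper's: encode $T$ by its height word $S_T$, show that $\rot$ reverses $S_T$, read $\evac(T)$ off the central column of the growth diagram via row-length differences, and compare. So the outline is right.

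Two corrections are worth flagging. First, Lemma~\ref{lem:row_length_differences} is not what you think: it is only the dictionary ``the $i$th letter of $S_T$ equals the row-length difference of $[T]_{i-1}$.'' The statement you want---how a single K-promotion transforms $S_T$---is a separate (and fairly easy) lemma in the paper (Lemma~\ref{lem:promotion_word_algorithm}), and horizontal steps do \emph{not} split or merge under K-promotion: $k$ is preserved, so that particular worry is unfounded.

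Second, and more substantively, you have the difficulty inverted. Knowing how one K-promotion step acts on $S_T$ is not the crux; the crux is the ``assembling it into the central-column computation'' step that you call routine. The paper packages this as the statement that in the height growth diagram the first row equals the rightmost column (Lemma~\ref{lem:equality_of_first_row_last_column_in_hgd}), and proves it by a nontrivial induction on the length of $S_T$ with a case split on whether $S_T$ has an internal $0$, and further subcases when it does not. There is no closed ``formula for the height word of $P_{\pro^{M-j}(T)}$ after $j$ steps'' that one simply reads off by iteration; the single-step rule of Lemma~\ref{lem:promotion_word_algorithm} is piecewise-defined and its iterates interact in a way that genuinely requires the inductive argument. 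So your plan is correct, but the load-bearing lemma is the iteration lemma, not the single-step one.
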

We will also need Proposition~\ref{prop:evac=rot} in the proof of Theorem~\ref{thm:evacuation_CSP}, and additionally it has recently found application in \cite[Theorem~5.3]{bloom.p.saracino} which demonstrates homomesy (as defined by \cite{propp.roby:fpsac}) on $\twinc{n}{k}$. 
To prove Proposition~\ref{prop:evac=rot}, we use the bijection between $\twinc{n}{k}$ and small Schr\"oder paths from Theorem \ref{thm:enumeration}. These paths are themselves in bijection with the sequence of their node heights, which we call the \emph{height word}. Figure \ref{fig:bijections}(C) shows an example. For $T \in \twinc{n}{k}$, we write $P_T$ for the corresponding small Schr\"oder path and $S_T$ for the corresponding height word.

\begin{lemma}\label{lem:row_length_differences}
For $T \in \twinc{n}{k}$, the $i$th letter of the height word $S_T$ is the difference between the lengths of the first and second rows of the Young diagram $[T]_{i-1}$.
\end{lemma}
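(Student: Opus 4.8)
The plan is to track, step by step, how the entry $j$ of $T$ determines the shape difference $[T]_j - [T]_{j-1}$, and to match this against the bijection from the proof of Theorem~\ref{thm:enumeration} that sends $j$ to an upstep, downstep, or horizontal step according to whether $j$ appears only in row~1, only in row~2, or in both rows of $T$. Write $a_i$ and $b_i$ for the lengths of the first and second rows of $[T]_{i-1}$, so that the claim is exactly that the $i$th letter of $S_T$ equals $a_i - b_i$. Since $[T]_0 = \emptyset$, we have $a_1 - b_1 = 0$, which agrees with the path starting at height $0$; this is the base case.

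First I would set up the induction on $i$. The inductive step is a case analysis on where the value $i$ appears in $T$ (recall every value in $\{1,\dots,2n-k\}$ appears once or twice). If $i$ appears only in row~1, then passing from $[T]_{i-1}$ to $[T]_i$ adds one box to row~1 and nothing to row~2, so $a_{i+1} - b_{i+1} = (a_i+1) - b_i = (a_i - b_i) + 1$; and by the bijection the $i$th segment of $P_T$ is an upstep, which raises the height by $1$. If $i$ appears only in row~2, then one box is added to row~2 only, so the difference decreases by $1$, matching a downstep. If $i$ appears in both rows, one box is added to each row, the difference is unchanged, and the $i$th segment is horizontal, which also leaves the height unchanged. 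In every case the new height equals $a_{i+1}-b_{i+1}$, completing the induction.

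The one point that needs a short argument rather than pure bookkeeping is that these three local moves are actually legal, i.e.\ that $[T]_j$ is always a genuine Young diagram with $a_j \ge b_j$ (so the ``difference'' is a nonnegative height, and adding a box to row~2 is permitted exactly when $a_j > b_j$, equivalently when the current height is positive). This is where the increasing-tableau hypothesis does real work: strict increase down columns forces that whenever $i$ sits in box $(2,c)$ of $T$, some value strictly less than $i$ sits in box $(1,c)$, so $[T]_{i-1}$ already contains $(1,c)$; hence $a_{i-1+1} \ge b_{i-1+1}$ is maintained, and in fact the height returns to $0$ precisely when a row-2-only step would be illegal — which is consistent with $P_T$ never falling below the axis and with horizontal steps never lying on the axis (a horizontal step at height $0$ would correspond to $i$ in both rows with $a=b$, again impossible by column-strictness). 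I expect this legality check to be the main obstacle, though it is genuinely short; the rest is a routine unwinding of the two bijections. I would present the case analysis compactly, perhaps noting that it simultaneously reproves the well-definedness of the bijection $T \mapsto P_T$ from the height-word perspective.
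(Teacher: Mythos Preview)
Your proof is correct and follows essentially the same approach as the paper: induction on $i$ with base case $i=1$, followed by the three-way case analysis on whether the value $i$ appears only in row~1, only in row~2, or in both rows, matching these to upstep, downstep, and horizontal step respectively. The paper's proof is terser and omits the legality discussion in your final paragraph (it is not strictly needed for the equality being asserted, since $[T]_j$ is automatically a Young diagram for an increasing tableau $T$), but your added remarks are correct and harmless.
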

\begin{proof}
By induction on $i$. For $i=1$, both quantities equal 0. The $i$th segment of $P_T$ is an upstep if and only if $[T]_{i} \, \backslash \, [T]_{i-1}$ is a single box in the first row. The $i$th segment of $P_T$ is an downstep if and only if $[T]_{i} \, \backslash \, [T]_{i-1}$ is a single box in the second row. The $i$th segment of $P_T$ is horizontal if and only if $[T]_{i} \, \backslash \, [T]_{i-1}$ is two boxes, one in each row. 
\end{proof}

\begin{lemma}\label{lem:rot_is_reflection_of_path}
Let $T \in \twinc{n}{k}$. Then $P_{\rot(T)}$ is the reflection of $P_T$ across a vertical line and $S_{\rot(T)}$ is the word formed by reversing $S_T$.
\end{lemma}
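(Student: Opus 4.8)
The plan is to compare the height words of $T$ and $\rot(T)$ directly, using the combinatorial description of $\rot$ and the characterization of height words from Lemma~\ref{lem:row_length_differences}. Recall that $\rot(T)$ is obtained by rotating the rectangular shape $(n,n)$ by $180$ degrees and replacing each entry $x$ by $M+1-x$, where $M = 2n-k = \maxv{T}$. The first thing I would establish is the relationship between the Young diagrams $[\rot(T)]_j$ and the diagrams $[T]_i$. Specifically, a box $b$ of the $(n,n)$ rectangle has entry $\le j$ in $\rot(T)$ precisely when the antipodal box $b^\circ$ (under $180^\circ$ rotation) has entry $\ge M+1-j$ in $T$, i.e.\ entry $> M-j$ in $T$. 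So $[\rot(T)]_j$ is the complement, within the $(n,n)$ rectangle, of the $180^\circ$-rotation of $[T]_{M-j}$.

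The key step is then a length computation. Write $a_i, b_i$ for the lengths of the first and second rows of $[T]_i$, so the $i$th letter of $S_T$ is $a_{i-1} - b_{i-1}$ by Lemma~\ref{lem:row_length_differences}. Since $[\rot(T)]_j$ is the rectangle minus the rotation of $[T]_{M-j}$, and rotation by $180^\circ$ swaps the two rows and reverses each row, the first row of $[\rot(T)]_j$ has length $n - b_{M-j}$ and its second row has length $n - a_{M-j}$. Therefore the $(j+1)$st letter of $S_{\rot(T)}$ is $(n - b_{M-j}) - (n - a_{M-j}) = a_{M-j} - b_{M-j}$, which is exactly the $(M-j+1)$st letter of $S_T$. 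As $j$ ranges over $0,\dots,M$, the index $M-j+1$ ranges over $M+1,\dots,1$, so $S_{\rot(T)}$ is $S_T$ read backwards. Finally, reversing the height word corresponds geometrically to reflecting the path $P_T$ across a vertical line (the node heights are listed in the opposite order, and upsteps/downsteps are interchanged while horizontal steps stay horizontal), which gives the statement about $P_{\rot(T)}$.

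I do not expect any serious obstacle here; the only point requiring a little care is bookkeeping the effect of $180^\circ$ rotation on a skew-free rectangular shape—namely that it complements the diagram inside the rectangle and simultaneously swaps rows—and checking that this is compatible with the alphabet-reversal $x \mapsto M+1-x$ so that the sublevel sets $[\,\cdot\,]_j$ transform as claimed. Once that identity $[\rot(T)]_j = (n,n) \setminus (\text{rotation of } [T]_{M-j})$ is in hand, the rest is the one-line length computation above together with the elementary observation that word-reversal of a height word is reflection of the corresponding Schröder path.
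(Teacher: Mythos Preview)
Your argument is correct. It differs from the paper's in where the main computation takes place: the paper proves the path statement first and deduces the height-word statement, whereas you prove the height-word statement first (via Lemma~\ref{lem:row_length_differences}) and deduce the path statement. Concretely, the paper decomposes $\rot$ into its two constituent operations and tracks each on $P_T$ directly: physically rotating the $2\times n$ array by $180^\circ$ swaps rows and hence swaps upsteps with downsteps (reflection of $P_T$ across the horizontal axis), while reversing the alphabet reverses the order of the steps without changing their types (rotation of $P_T$ by $180^\circ$); composing these two isometries gives reflection across a vertical line. Your route instead establishes the identity $[\rot(T)]_j = (n,n)\setminus\bigl(\text{rotation of }[T]_{M-j}\bigr)$ and reads off the row-length difference. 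The paper's approach is quicker and more geometric; yours is more explicit and makes the dependence on Lemma~\ref{lem:row_length_differences} transparent, at the cost of a little index bookkeeping. Either is perfectly adequate for this lemma.
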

\begin{proof}
Rotating $T$ by $180$ degrees corresponds to reflecting $P_T$ across the horizontal axis. Reversing the alphabet corresponds to rotating $P_T$  by $180$ degrees. Thus $\rot(T)$ corresponds to the path given by reflecting $P_T$ across a vertical line. 

The correspondence between reflecting $P_T$ and reversing $S_T$ is clear.
\end{proof}

\begin{lemma}\label{lem:evacuation_word_formulation}
Let $T \in \twinc{n}{k}$ and $M =2n-k$. Let $x_{i}$ denote the $(M+2-i)$th letter of the height word $S_{\pro^{i-1}(T)}$. Then $S_{\evac(T)} = x_{M+1} x_{M} \dots x_1$.
\end{lemma}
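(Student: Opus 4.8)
The plan is to translate both sides of the claimed identity into statements about a single column of the K-theory growth diagram for $T$, where they visibly agree; the only real work is index bookkeeping, so I do not expect a genuine obstacle.

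First I would assemble the two ingredients to be combined. Lemma~\ref{lem:row_length_differences} says that for any $U \in \twinc{n}{k}$ and any $m$ with $1 \le m \le M+1$, the $m$th letter of $S_U$ equals the difference of the first and second row lengths of the Young diagram $[U]_{m-1}$; call this quantity $\mathrm{rowdiff}([U]_{m-1})$. Second, by the definition of K-evacuation recalled before Lemma~\ref{lem:basic_combinatorics_of_P_and_E}, the chain $([\evac(T)]_j)_{0\le j\le M}$ equals the chain $([\pro^{M-j}(T)]_j)_{0\le j\le M}$, so in particular $[\evac(T)]_{i-1} = [\pro^{M-i+1}(T)]_{i-1}$ for each $i = 1,\dots,M+1$. (Here one uses that K-promotion permutes $\twinc{n}{k}$: by the Remark following the definition it is reversible, and it clearly preserves the shape $(n,n)$ and the maximum entry $M$, so each $\pro^{M-i+1}(T)$ again lies in $\twinc{n}{k}$ and its height word is defined.)

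Combining these, I would compute the $i$th letter of each side. Applying Lemma~\ref{lem:row_length_differences} first with $U = \evac(T)$ and then with $U = \pro^{M-i+1}(T)$, the $i$th letter of $S_{\evac(T)}$ is $\mathrm{rowdiff}([\evac(T)]_{i-1}) = \mathrm{rowdiff}([\pro^{M-i+1}(T)]_{i-1})$, and this last quantity is exactly the $i$th letter of $S_{\pro^{M-i+1}(T)}$. On the other side, the $i$th letter of $x_{M+1}x_M\cdots x_1$ is $x_{M+2-i}$, which by definition of $x_\bullet$ is the $\big(M+2-(M+2-i)\big)$th $=i$th letter of $S_{\pro^{(M+2-i)-1}(T)} = S_{\pro^{M+1-i}(T)}$. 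Since $M-i+1 = M+1-i$, the two expressions for the $i$th letter coincide for every $i$, which proves the statement.

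For a more conceptual organization one can phrase this through the growth diagram directly: using $YD_{pq} = [\pro^{p-1}(T)]_{q-p}$, the $m$th letter of $S_{\pro^{p-1}(T)}$ is $\mathrm{rowdiff}(YD_{p,\,p+m-1})$, hence $x_i = \mathrm{rowdiff}(YD_{i,\,M+1})$ for all $i$, so the word $x_{M+1}\cdots x_1$ is the column $M+1$ of the growth diagram read from bottom to top; by the proof of Lemma~\ref{lem:basic_combinatorics_of_P_and_E} this central column encodes $\evac(T)$, and applying Lemma~\ref{lem:row_length_differences} to that tableau gives $S_{\evac(T)}$. The only thing to watch is the off-by-one shifts — there are $M+1$ letters and $M+1$ rows, and the central column spans rows $1$ through $M+1$ — but nothing beyond that is needed, since the substantive input (that the central column simultaneously encodes $\evac(T)$) is already in hand.
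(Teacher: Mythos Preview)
Your proposal is correct and takes essentially the same approach as the paper: identify $x_i$ with $\mathrm{rowdiff}(YD_{i,M+1})$ via Lemma~\ref{lem:row_length_differences}, and then use that the central column $(YD_{i,M+1})_{1\le i\le M+1}$ encodes $\evac(T)$ (which is exactly the definitional identity $[\evac(T)]_j=[\pro^{M-j}(T)]_j$ you invoke). Your first presentation simply unwinds the growth-diagram language into the explicit chain defining $\evac$, so the two versions you give and the paper's proof are the same argument with only cosmetic differences.
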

\begin{proof}
Consider the K-theory growth diagram for $T$. Observe that $YD_{i,M+1}$ is the $(M+2-i)$th Young diagram in the $i$th row. Hence by Lemma \ref{lem:row_length_differences}, $x_i$ is the difference between the lengths of the rows of $YD_{i,M+1}$. But $YD_{i,M+1}$ is also the $i$th Young diagram from the top in the central column. The lemma follows by recalling that the central column encodes $\evac(T)$. 
\end{proof}

We define the \emph{flow path} $\phi(T)$ of an increasing tableau $T$ to be the set of all boxes that are ever empty during the K-promotion that forms $\pro(T)$ from $T$.

\begin{lemma}\label{lem:promotion_word_algorithm}
Let $T \in \twinc{n}{k}$.
\begin{enumerate}[(a)]
\item The word $S_T$ may be written in exactly one way as $0w_10w_3$ or $0w_11w_20w_3$, where $w_1$ is a sequence of strictly positive integers that ends in 1 and contains no consecutive 1s, $w_2$ is a (possibly empty) sequence of strictly positive integers, and $w_3$ is a (possibly empty) sequence of nonnegative integers. 

\item Let $w_1^-$ be the sequence formed by decrementing each letter of $w_1$ by 1. Similarly, let $w_3^+$ be formed by incrementing each letter of $w_3$ by 1. 

If $S_T$ is of the form $0w_10w_3$, then $S_{\pro(T)} =  w_1^-1w_3^+0$. If $S_T$ is of the form $0w_11w_20w_3$, then $S_{\pro(T)} =  w_1^-1w_21w_3^+0$.
\end{enumerate}
\end{lemma}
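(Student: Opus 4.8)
The plan is to handle the two parts separately; part (a) is a purely combinatorial statement about the word $S_T$, and I would argue it as follows. Record first the constraints on $S_T = h_0 h_1 \cdots h_M$ (with $M = 2n - k$) coming from its being the height word of a small Schr\"oder path: $h_0 = h_M = 0$, each $h_i \geq 0$, consecutive entries differ by at most $1$, and---because no horizontal step lies on the axis---one never has $h_{i-1} = h_i = 0$; in particular $h_1 = 1$. Let $q$ be the least index $\geq 1$ with $h_q = 1$ and $h_{q+1} \leq 1$; this exists with $q \leq M - 1$, since $h_{M-1} = 1$. The minimality of $q$ forces $h_1, \dots, h_{q-1} \geq 1$ and no two consecutive entries of $h_1, \dots, h_q$ equal to $1$: a violation of either would, together with the ``no $00$'' and unit-step rules, exhibit a valid index smaller than $q$. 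Put $w_1 := h_1 \cdots h_q$. If $h_{q+1} = 0$ we are in the first form, with $w_3 := h_{q+2} \cdots h_M$; if $h_{q+1} = 1$, then letting $r$ be the least index $> q + 1$ with $h_r = 0$ (which exists as $h_M = 0$), we are in the second form, with $w_2 := h_{q+2} \cdots h_{r-1}$ (positive by minimality of $r$) and $w_3 := h_{r+1} \cdots h_M$. Uniqueness follows because in any admissible decomposition $w_1$ must be a string of positive integers ending in $1$ with no two consecutive $1$s and followed by a $0$ or a $1$, so the defining property of $q$ forces $w_1 = h_1 \cdots h_q$; the rest of the decomposition is then determined.

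For part (b), I would analyze the flow path $\phi(T)$ directly. Since the hole born at $(1,1)$ can only move right or down, it occupies an initial segment of row $1$ and a terminal segment of row $2$; and since $(2, n)$ is the unique box of the $2 \times n$ rectangle with no SE-neighbor, the flow terminates with a single hole there, so $\phi(T) = \{(1,1), \dots, (1, c_1)\} \cup \{(2, c_2), \dots, (2, n)\}$ for some $1 \leq c_2 \leq c_1 \leq n$. Tracking the local moves of Figure~\ref{fig:local_moves}, the flow proceeds in phases: the hole marches right along row $1$; at column $c_1$ it either turns down by one move of the second type (whence $c_2 = c_1$), or splits by a move of the third type at column $c_2$, after which a two-box front marches diagonally right---each step consuming the entries $T(1, i+1)$ and $T(2, i)$---until a move of the fourth type merges it into a single hole at column $c_1$ of row $2$; in either case the hole then slides right along row $2$ to $(2, n)$. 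I would check that the first alternative is exactly the first form $0 w_1 0 w_3$ of part (a), with $w_1$ recording the rightward march and the interior $0 = h_{q+1}$ the entry $T(2, c_1)$ at which the hole descends to height $0$; and that the second alternative is the form $0 w_1 1 w_2 0 w_3$, with the distinguished $1 = h_{q+1}$ the horizontal step (an entry lying in both rows) that triggers the split, $w_2$ the remainder of the diagonal march, and the interior $0 = h_r$ the entry at which the front merges.

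To read off $S_{\pro(T)}$, I would use that for $j < M$ the Young diagram $[\pro(T)]_j$ is the set of boxes whose entry in the flowed tableau is at most $j + 1$, and compare it with $[T]_{j+1}$ box by box across the phases. Off $\phi(T)$ the two diagrams agree. For indices before the corner turn (first form) or before the split (second form) they differ by the single box $(1,1)$ that the hole vacated, so the row-length difference drops by $1$, accounting for $w_1 \mapsto w_1^-$. During the diagonal march (second form only) they differ by one box in each row, so the difference---hence $w_2$---is unchanged. After the corner turn or the merge they differ by a single box of row $2$, so the difference rises by $1$, accounting for $w_3 \mapsto w_3^+$. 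Finally, the corner-turn entry supplies the interior $0 \mapsto 1$ of the first form, the split-trigger and merge entries supply the $1$ and the $0 \mapsto 1$ of the second form, and the terminal empty box, relabelled to $M + 1$, supplies the trailing $0$ of both (the leading $0$ of $S_T$ being absorbed by the index shift). Reassembling these contributions in order yields the two displayed words.

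The step I expect to be the main obstacle is the bookkeeping for the second form: verifying that the diagonal front stays offset by exactly one column throughout (so it genuinely consumes $T(1, i+1)$ and $T(2, i)$ in lockstep and re-merges before running off the board), and that the indices $q$, $r$, $c_1$, $c_2$ align precisely with the phase transitions---together with the degenerate cases in which $w_2$ or $w_3$ is empty or in which the turn or merge occurs in the last column. As an alternative to the hands-on flow analysis, one could instead make the two-row specialization of the K-jeu de taquin growth rule of Remark~\ref{rem:growth_rules} fully explicit and propagate the height word through the growth diagram, trading the flow-path casework for an equivalent local verification.
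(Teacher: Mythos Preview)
Your proposal is correct and follows essentially the same approach as the paper's proof: both determine the flow path $\phi(T)$ explicitly as an initial segment of row~1 together with a terminal segment of row~2, distinguish the two cases by whether the hole turns down cleanly or first splits into a two-box diagonal front, and then read off $S_{\pro(T)}$ from how the entries shift. The paper is much terser---it simply asserts that part (a) ``is clear'', records the flow path in each case, and declares the resulting height word ``clear''---whereas you spell out the index $q$ governing the decomposition, the phase-by-phase comparison of $[\pro(T)]_j$ with $[T]_{j+1}$, and the boundary cases; but the underlying argument is the same.
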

\begin{proof}
It is clear that $S_T$ may be written in exactly one of the two forms. Write $\ell_i$ for the length of $w_i$. Suppose first that $S_T$ is of the form $0w_10w_3$. By the correspondence between tableaux and height sequences, $[T]_{\ell_1 + 1}$ is a rectangle, and for no $0<x < \ell_1 + 1$ is $[T]_x$ a rectangle. Say $[T]_{\ell_1 + 1} = (m, m)$. The flow path $\phi(T)$ contains precisely the first $m$ boxes of the first row and the last $n - m + 1$ boxes of the second row. Only the entry in box $(2,m)$ changes row during K-promotion. It is clear then that $S_{\pro(T)} =  w_1^-1w_3^+0$.

Suppose now that $S_T$ is of the form $0w_11w_20w_3$. Then $[T]_{\ell_1 + 1} = (p+1, p)$ for some $p$, and $[T]_{\ell_1 + \ell_2 + 2} = (m, m)$ for some $m$. The flow path $\phi(T)$ contains precisely the first $m$ boxes of the first row and the last $n - p + 1$ boxes of the second row. It is clear then that $S_{\pro(T)} =  w_1^-1w_21w_3^+0$.
\end{proof}

Notice that when $T \in \syt{2 \times n}$, $S_T$ can always be written as $0w_10w_3$. Hence by Lemma~\ref{lem:promotion_word_algorithm}(b), the promotion $S_{\pro(T)}$ takes the particularly simple form $w_1^-1w_3^+0$.

For $T \in \twinc{n}{k}$, take the first $2n-k+1$ columns of the K-theory growth diagram for $T$. Replace each Young diagram in the resulting array by the difference between the lengths of its first and second rows. Figure \ref{fig:equality_of_first_row_last_column_in_hgd} shows an example. We write $a_{ij}$ for the number corresponding to the Young diagram $YD_{ij}$. By Lemma \ref{lem:row_length_differences}, we see that the $i$th row of this array of nonnegative integers is exactly the first $2n-k+2- i$ letters of $S_{\pro^{i-1}(T)}$. Therefore we will refer to this array as the \emph{height growth diagram for $T$}, and denote it by $\hgd{T}$. Observe that the rightmost column of $\hgd{T}$ corresponds to the central column of the K-theory growth diagram for $T$.

\begin{figure}[h]
$\begin{array}{ccccccccc} 0 & 1 & 1 & 0 & 1 & 2 & 2 & 1 & 0 \\ & 0 & 1 & 1 & 2 & 3 & 3 & 2 & 1 \\ 
& & 0 & 1 & 2 & 3 & 3 & 2 & 1 \\ & & & 0 & 1 & 2 & 2 & 1 & 0 \\ & & & & 0 & 1 & 1 & 0 & 1 \\ & & & & & 0 & 1 & 1 & 2 \\ & & & & & & 0 & 1 & 2 \\ & & & & & & & 0 & 1 \\ & & & & & & & & 0 
\end{array}$
\caption{The height growth diagram $\hgd{T}$ for the tableau $T$ shown in Figure \ref{fig:bijections}(A). The $i$th row shows the first $10 - i$ letters of $S_{\pro^{i-1}(T)}$. Lemma \ref{lem:equality_of_first_row_last_column_in_hgd} says that row~1 is the same as column 9, read from top to bottom.}\label{fig:equality_of_first_row_last_column_in_hgd}
\end{figure}

We will sometimes write $\pro(S_T)$ for $S_{\pro(T)}$. 

\begin{lemma}\label{lem:equality_of_first_row_last_column_in_hgd}
In $\hgd{T}$ for $T \in \twinc{n}{k}$, we have for all $j$ that $a_{1j} = a_{j,2n-k+1}$.
\end{lemma}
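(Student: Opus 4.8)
The plan is to translate the statement into one about height words and then prove it by induction using the explicit description of $\pro$ in Lemma~\ref{lem:promotion_word_algorithm}. By Lemma~\ref{lem:row_length_differences} the first row of $\hgd{T}$ is literally $S_T$, and by Lemma~\ref{lem:evacuation_word_formulation} the entry $a_{j,2n-k+1}$ is the last letter of the (truncated) word shown in row $j$, i.e.\ the $(2n-k+2-j)$th letter of $S_{\pro^{j-1}(T)}$. So the assertion $a_{1j}=a_{j,2n-k+1}$ for all $j$ is equivalent to: reading the last column of $\hgd{T}$ from top to bottom recovers $S_T$; equivalently, for every $i$ the $(2n-k+2-i)$th letter of $S_{\pro^{i-1}(T)}$ equals the $i$th letter of $S_T$. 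With this in hand Proposition~\ref{prop:evac=rot} follows at once, since by Lemma~\ref{lem:evacuation_word_formulation} the last column read from bottom to top is $S_{\evac(T)}$, and reversal of height words realizes $\rot$ by Lemma~\ref{lem:rot_is_reflection_of_path}.

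To prove the equivalent statement I would induct on the row index $i$, writing $R_i:=S_{\pro^{i-1}(T)}$, so $R_1=S_T$ and $R_{i+1}=\pro(R_i)$ is computed by Lemma~\ref{lem:promotion_word_algorithm}. The base case $i=1$ is the identity $(R_1)_{2n-k+1}=0=(S_T)_1$. For the inductive step I would write $R_i$ in one of the two forms $0w_10w_3$ or $0w_11w_20w_3$ and follow position $2n-k+2-i$ across the passage to $R_{i+1}$: in the form $0w_10w_3$ the final letter of $R_{i+1}$ becomes $0$, the $w_3$-block is incremented and moved one place left, and the $w_1$-block is decremented; in the form $0w_11w_20w_3$ the $w_2$-block is unchanged. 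One then checks that the letter arriving in position $2n-k+1-i$ of $R_{i+1}$ is forced to be the $(i+1)$st letter of $S_T$. Since the blocks $w_1,w_2,w_3$ shift from one row to the next, making this step clean will likely require carrying a stronger inductive hypothesis — for instance, simultaneous control of a short window of letters of $R_i$ near position $2n-k+2-i$, or of the location of the decomposition point of $R_i$ relative to that position.

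The main obstacle is exactly this bookkeeping: Lemma~\ref{lem:promotion_word_algorithm} has two cases, the blocks change length and content at every step, and one must verify that all the increments, decrements and shifts accumulated over $i-1$ promotions leave the relevant letter unchanged; this is also the step where the two-row hypothesis enters, as the analogous equality fails for rectangles with more rows (cf.~Example~\ref{ex:counterexample_to_orbit_size_conjecture}). An alternative would be to exhibit a symmetry of the K-theory growth diagram interchanging its first row with its central column, assembled from the invertibility and `$\lambda=\rho$' symmetry of the growth rules in Remark~\ref{rem:growth_rules} together with the $180^\circ$-rotation symmetry special to the $2\times n$ rectangle; but the height-word computation above is probably the more direct route given the machinery already in place.
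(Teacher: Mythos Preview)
Your reformulation is correct and matches the paper's setup exactly: the claim is that the last column of $\hgd{T}$, read top to bottom, equals $S_T$.  Where you diverge from the paper is in the choice of induction variable.  You propose to induct on the row index $i$, carrying forward the single equality $(R_i)_{M+2-i}=(S_T)_i$ (or some windowed strengthening of it).  The paper instead inducts on the \emph{length} $M+1$ of the height word.  The key observation that makes the paper's induction go through is that an internal $0$ in $S_T$ (say at position $j$) cuts the first $j$ columns of $\hgd{T}$ off as the height growth diagram of a strictly shorter word, and simultaneously, because every entry in column $j$ is dominated by the entries to its right in the same row, the increments/decrements of Lemma~\ref{lem:promotion_word_algorithm} act uniformly on columns $j$ through $M+1$ for the first $j$ rows.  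This lets one reduce to two shorter problems and glue.  When $S_T$ has no internal $0$, one promotes once and either lands in Case~1 or in one of two small boundary cases ($S_T=010$ or $S_T$ begins $011$) that are handled directly.

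The gap in your plan is exactly where you flagged it: inducting on $i$ alone, the hypothesis $(R_i)_{M+2-i}=(S_T)_i$ carries no information about $(S_T)_{i+1}$, and the promotion rule gives $(R_{i+1})_{M+1-i}$ only as a function of where position $M+1-i$ sits relative to the decomposition of $R_i$ --- data that is not controlled by the inductive hypothesis.  You correctly sense that a stronger hypothesis is needed, but any such strengthening would effectively have to track how the decomposition points of $R_1,\dots,R_i$ drift, which is what the paper's internal-zero argument does in a structured way.  I would recommend abandoning the row-by-row induction and adopting the paper's induction on word length: look for the first internal $0$, peel off the prefix as an independent smaller $\hgd{}$, and use the uniform-increment observation to handle the suffix.
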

\begin{proof}
Let $M = 2n-k$. We induct on the length of the height word. (The length of $S_T$ is $M+1$.) 
\begin{case} The height word $S_T$ contains an internal 0. \end{case} 
Let the first internal 0 be the $j$th letter of $S_T$. Then the first $j$ letters of $S_T$ are themselves the height word of some smaller rectangular increasing tableau $T'$. Because of the local properties of K-theory growth diagrams mentioned in Remark \ref{rem:growth_rules}, we observe that the $j$th column of $\hgd{T}$ is the same as the rightmost column of $\hgd{T'}$. The height word $S_{T'}$ is shorter than the height word $S_T$, so by inductive hypothesis, the first row of $\hgd{T'}$ is the same as its rightmost column, read from top to bottom. Thus in $\hgd{T}$, the first $j$ letters of row 1 are the same as column $j$. 

According to Lemma \ref{lem:promotion_word_algorithm}(b), in each of the first $j$ rows of $\hgd{T}$, the letter in column $j$ is less than or equal to all letters to its right.  Furthermore the letters in columns $j + 1$ through $M+1$ are incremented, decremented, or unchanged from one row to the next in exactly the same way as the letter in column $j$. That is to say, for any $g\leq j \leq h$, $a_{gh} - a_{1h} = a_{gj} - a_{1j}$. Since $a_{1j} = a_{1,M+1} = 0$, this yields $a_{gj} = a_{g,M+1}$, so column $j$ is the same as the first $j$ letters of column $M+1$, read from top to bottom. Thus the first $j$ letters of row 1 are the same as the first $j$ letters of column $M+1$.

Now since $a_{j,M+1} = 0$, row $j$ of $\hgd{T}$ is itself the height word of some smaller tableau $T^\dagger$. Again by inductive hypothesis, we conclude that in $\hgd{T}$, row $j$ is the same as the last $M + 2-j$ letters of column $M+1$, read from top to bottom. But as previously argued, the letters in columns $j + 1$ through $M+1$ are incremented, decremented, or unchanged from one row to the next in the same way as the letter in column $j$. Hence row $j$ agrees with the last $M + 2-j$ letters of row 1, and so the last $M+2-j$ letters of row 1 agree with the last $M+2-j$ letters of column $M+1$. Thus, as desired, row 1 of $\hgd{T}$ is the same as column $M+1$, read from top to bottom.

\begin{case} The height word $S_T$ contains no internal 0. \end{case}
Notice that $s_{1M} = 1$. Hence by Lemma \ref{lem:promotion_word_algorithm}(b), there will be an internal 0 in the K-promotion $S_{\pro(T)}$, unless $S_T$ is the word $010$ or begins $011$. 

\begin{subcase} The height word $S_{\pro(T)}$ contains an internal 0. \end{subcase}
Let the first internal 0 be in column $j$ of $\hgd{T}$. Then by Lemma \ref{lem:promotion_word_algorithm}(b), the first $j -1$ letters of row 2 of $\hgd{T}$ are all  exactly one less than the letters directly above them in row 1. That is for $2\leq h\leq j$, we have $a_{2h} = a_{1h} -1$. Also observe $a_{2,M+1}=1$.
 
The first $j-1$ letters of row 2 are the height sequence of some tableau $T'$ with $S_{T'}$ shorter than $S_T$. So by inductive hypothesis, the first $j - 1$ letters of row 2 of $\hgd{T}$ are the same as the last $j-1$ letters of column $j$, read from top to bottom. That is to say $a_{2h} = a_{hj}$, for all $2 \leq h \leq j$. 

Since $a_{2j}=0$ and $a_{2,M+1}=1$, and since  the letters below the first row in columns $j + 1$ through $M+1$ are incremented, decremented, or unchanged in the same way as the letter in column $j$, it follows that for $2\leq h \leq j$, $a_{hj} = a_{h,M+1} - 1$. Therefore the first $j$ letters of row 1 are the same as the first $j$ letters of column $M+1$.

Consider the height word $S'$ formed by prepending a 0 to the last $M+2 -j$ letters of row 1. The last $M+2 -j$ letters of row 2 are the same as the first $M+2-j$ letters of $\pro(S')$. But the last $M+2 -j$ letters of row 2 are the same as row $j$. Therefore by inductive hypothesis, the last $M+2 -j$ letters of column $M+1$ are the same as the last $M+2-j$ letters of $S'$, which are by construction exactly the last $M+2 -j$ letters of row 1. Thus row 1 is exactly the same as column $M+1$.

\begin{subcase} $S_T= 010$. \end{subcase}Trivially verified by hand. 

\begin{subcase} $S_T$ begins $011$. \end{subcase} Row 2 of $\hgd{T}$ is produced from row 1 by deleting the initial 0, changing the first 1 into a 0, and changing the final 0 into a 1. Let $S'$ be the word formed by replacing the final 1 of row 2 with a 0. Note that $a_{3,M+1} = 1$. Therefore row 3 agrees with the first $M-1$ letters of $\pro(S')$. Therefore by inductive hypothesis, the last $M-1$ letters of $S'$ are the same as the last $M-1$ letters of column $M+1$. Hence the last $M$ letters of column $M+1$ are the same as row 2, except for having a 1 at the beginning instead of a 0 and a 0 at the end instead of a 1. But these are exactly the changes we made to produce row 2 from row 1. Thus row 1 is the same as column $M+1$.
\end{proof}

\begin{corollary}\label{cor:tableau_path_word}
In the notation of Lemma \ref{lem:evacuation_word_formulation}, $S_T = x_1 x_2 \dots x_{M+1}$. \qed
\end{corollary}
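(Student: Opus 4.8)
The plan is to read the statement straight off the structure of the height growth diagram $\hgd{T}$ together with Lemma~\ref{lem:equality_of_first_row_last_column_in_hgd}; no genuinely new combinatorial input is needed, since all the work has been front-loaded into that lemma. Throughout, write $M = 2n-k$, so that $S_T$ has length $M+1$ and $\hgd{T}$ has $M+1$ columns, indexed $1$ through $M+1$.

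First I would recall the two defining features of $\hgd{T}$. By Lemma~\ref{lem:row_length_differences} and the construction of $\hgd{T}$, row~$1$ is precisely the height word $S_T$, i.e.\ $S_T = a_{1,1}\,a_{1,2}\cdots a_{1,M+1}$. More generally, row~$i$ records the first $M+2-i$ letters of $S_{\pro^{i-1}(T)}$, and—because each successive row is offset one step to the right—row~$i$ occupies exactly columns $i$ through $M+1$. Hence the final (that is, $(M+2-i)$th) letter of $S_{\pro^{i-1}(T)}$ recorded in row~$i$ sits in column $M+1$; by the definition of $x_i$ in Lemma~\ref{lem:evacuation_word_formulation}, this says $x_i = a_{i,M+1}$ for every $i$. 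In other words, the word $x_1 x_2 \cdots x_{M+1}$ is exactly the rightmost column of $\hgd{T}$ read top to bottom. (This matches Lemma~\ref{lem:evacuation_word_formulation} on the nose: that rightmost column is the central column of the K-theory growth diagram, which encodes $\evac(T)$, and reversing it gives $S_{\evac(T)} = x_{M+1}\cdots x_1$.)

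Finally I would invoke Lemma~\ref{lem:equality_of_first_row_last_column_in_hgd}, which asserts exactly that row~$1$ and column $M+1$ of $\hgd{T}$ agree entry-by-entry: $a_{1,j} = a_{j,M+1}$ for all $j$. Combining this with the previous paragraph, $a_{1,j} = a_{j,M+1} = x_j$ for each $j$, and therefore $S_T = a_{1,1}\cdots a_{1,M+1} = x_1 x_2 \cdots x_{M+1}$, as claimed.

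I do not expect any real obstacle: the only care required is bookkeeping—tracking the one-step-per-row offset of $\hgd{T}$, the $(M+2-i)$ indexing built into the definition of $x_i$, and the identification of the rightmost column of $\hgd{T}$ with the central column of the K-theory growth diagram so that the reversal appearing in Lemma~\ref{lem:evacuation_word_formulation} stays consistent. It is worth noting where this leads: once Corollary~\ref{cor:tableau_path_word} is in hand, Lemmas~\ref{lem:rot_is_reflection_of_path} and~\ref{lem:evacuation_word_formulation} immediately give $S_{\evac(T)} = x_{M+1}\cdots x_1$, the reverse of $S_T$, which is $S_{\rot(T)}$; hence $\evac(T) = \rot(T)$, i.e.\ Proposition~\ref{prop:evac=rot}, and thence Theorem~\ref{thm:orbit_size}.
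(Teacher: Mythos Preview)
Your proposal is correct and is exactly the intended argument: the paper marks this corollary with a bare \qed, treating it as immediate from Lemma~\ref{lem:equality_of_first_row_last_column_in_hgd}, and you have simply written out the bookkeeping (row~1 of $\hgd{T}$ is $S_T$, column $M+1$ is $x_1\cdots x_{M+1}$, and the lemma equates them). There is no difference in approach, only in level of detail.
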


\begin{proof}[Proof of Proposition~\ref{prop:evac=rot}]
By Corollary \ref{cor:tableau_path_word}, $S_T = x_1 x_2 \dots x_{2n-k+1}$. Hence by Lemma \ref{lem:rot_is_reflection_of_path}, we have $S_{\rot(T)} = x_{2n-k+1} x_{2n-k} \dots x_1$. However Lemma \ref{lem:evacuation_word_formulation} says also $S_{\evac(T)} = x_{2n-k+1} x_{2n-k} \dots x_1$. By the bijective correspondence between tableaux and height words, this yields $\evac(T) = \rot(T)$.
\end{proof}

This completes our first proof of Theorem \ref{thm:orbit_size}. We will obtain alternate proofs in Sections~\ref{sec:tropical_frieze} and \ref{sec:CSP}. We now show a counterexample to the obvious generalization of Theorem~\ref{thm:orbit_size} to increasing tableaux of more than two rows.
\begin{example}\label{ex:counterexample_to_orbit_size_conjecture}
If $T$ is the increasing tableau \ytableausetup{centertableaux} $\ytableaushort{1247,3{\underline{5}}68,5{\underline{7}}8{10},79{10}{11}},$ then $\pro^{11}(T) =  \ytableaushort{1247,3{\underline{4}}68,5{\underline{6}}8{10},79{10}{11}}$. (The underscores mark entries that differ between the two tableaux.) It can be verified that the promotion rank of $T$ is 33. 
\end{example}
Computer checks of small examples (including all with at most seven columns) did not identify such a counterexample for $T$ a 3-row rectangular increasing tableau. However it is not generally true that $\evac(T) = \rot(T)$ for $T \in \inc{3 \times n}{k}$.
\begin{example}\label{ex:counterexample_to_rot=evac}
If $T$ is the increasing tableau \ytableausetup{centertableaux} $\ytableaushort{124,3{\underline{4}}6,578}$ then $\evac(T) = T$, while $\rot(T) =  \ytableaushort{124,3{\underline{5}}6,578}$. Nonetheless the promotion rank of $T$ is $2$, which divides $8$, so the obvious generalization of Theorem~\ref{thm:orbit_size} holds in this example.
\end{example}

\section{Tropical frieze patterns}\label{sec:tropical_frieze}
In this section, we make connections with tropical frieze patterns, which we use to give an alternate proof of Proposition~\ref{prop:evac=rot} and Theorem~\ref{thm:orbit_size}. 

Frieze patterns are simple cluster algebras introduced in \cite{conway.coxeter}. They are infinite arrays of real numbers bounded between two parallel diagonal lines of $1$s, satisfying the property that for each $2 \times 2$ subarray $\begin{smallmatrix} a & b \\ c & d \end{smallmatrix}$ the relation $d = (bc + 1) / a$ holds. Figure~\ref{fig:classical_frieze} shows an example. Notice that by this local defining relation, the frieze pattern is determined by any one of its rows. 

\begin{figure}[h]
\begingroup
\renewcommand*{\arraystretch}{1.4}
$$\begin{array}{ccccccccccccc}
 &  &  \ddots & & &  & & & & & &&\\
1 & 3 & 5 & 4 & 2 & 1 & & & & & &&\\
& 1 & 2 & \frac{9}{5} &\frac{23}{20} & \frac{43}{40} & 1 & & &&& &\\
& & 1 & \frac{7}{5} & \frac{29}{20} & \frac{89}{40} & 3 & 1 & && &&\\
& & & 1 & \frac{7}{4} & \frac{27}{8} & 5 & 2 & 1 & & &&\\
& & & & 1 & \frac{5}{2} & 4 & \frac{9}{5} & \frac{7}{5} & 1&& & \\
& & & & & 1 & 2 & \frac{23}{20} & \frac{29}{20} & \frac{7}{4} & 1& &\\
& & & & & & 1 &  \frac{43}{40} & \frac{89}{40} & \frac{27}{8} & \frac{5}{2} & 1 & \\
& & & & & & & 1 & 3 & 5 & 4 & 2 & 1 \\
 &  &  & & &  & & & & & \ddots &&
\end{array}$$
\endgroup \caption{A classical Conway--Coxeter frieze pattern.} \label{fig:classical_frieze}
\end{figure}

A tropical analogue of frieze patterns may be defined by replacing the bounding 1s by 0s and imposing the tropicalized relation $d = \max \left( b + c, 0 \right) - a$ on each $2 \times 2$ subarray. Such \emph{tropical frieze patterns} have attracted some interest lately (e.g.,~\cite{propp, guo:frieze, assem.dupont, grabowski}).

One of the key results of \cite{conway.coxeter} is that, if rows of a frieze pattern have length $\ell$, then each row is equal to the row $\ell + 1$ rows below it, as well as to the central column between these two rows, read from top to bottom (\emph{cf.}~Figure~\ref{fig:classical_frieze}). That the same periodicity occurs in tropical frieze patterns may be proved directly by imitating the classical proof, or it may be easily derived from the classical periodicity by taking logarithms. We do the latter.

\begin{lemma}
If $\mathcal{TF}$ is a tropical frieze diagram with rows of length $\ell$, then each row is equal to the row $\ell + 1$ rows below it, as well as to the central column between these two rows, read from top to bottom.
\end{lemma}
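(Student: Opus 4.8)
The plan is to derive the tropical periodicity as a direct consequence of the classical Conway--Coxeter periodicity via a logarithmic change of variables, exactly as the paragraph preceding the lemma promises. The key observation is that the tropicalization map is a genuine limit: if we replace each entry $x$ of a frieze pattern by $t^x$ for a parameter $t>1$ (or, running it the other way, take $\log_t$ of a positive frieze pattern), then the classical recurrence $d = (bc+1)/a$ is carried to something that converges to the tropical recurrence $d = \max(b+c,0) - a$. Since the classical periodicity statement is an \emph{identity between entries} (each entry equals the corresponding entry one full period away, and equals the corresponding entry of the central column), and the family of identities is preserved under the deformation and the limit, the tropical identity follows.

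Concretely, I would proceed as follows. First, fix a tropical frieze pattern $\mathcal{TF}$ with rows of length $\ell$ and an entry configuration; recall from the preceding discussion that $\mathcal{TF}$ is determined by any one of its rows, so it suffices to start from a row $(r_1,\dots,r_\ell)$ of real numbers (the bounding $0$s being forced). Second, for each real parameter $t > 1$, form the classical frieze pattern $\mathcal{F}_t$ generated by the row $(t^{r_1}, \dots, t^{r_\ell})$; one must check this is a bona fide frieze pattern in the sense of \cite{conway.coxeter}, i.e.\ that all entries are positive and the bounding diagonals really are $1 = t^0$ — this holds because $t^0 = 1$ and the recurrence $d=(bc+1)/a$ preserves positivity. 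Third, observe that $\log_t$ applied entrywise to $\mathcal{F}_t$ satisfies $d = \log_t(t^b t^c + 1) - a$, and that as $t \to \infty$ this converges pointwise (entry by entry, by an easy induction on the antidiagonal) to the entries of $\mathcal{TF}$, since $\log_t(t^b t^c + 1) \to \max(b+c,0)$. Fourth, apply the classical periodicity result of \cite{conway.coxeter} to each $\mathcal{F}_t$: for every pair of positions $(p,q)$ related by ``$\ell+1$ rows down'' or by ``position in the central column,'' the entries of $\mathcal{F}_t$ at those positions are equal, hence so are their $\log_t$'s. Fifth, pass to the limit $t\to\infty$ in each such equation to conclude the corresponding entries of $\mathcal{TF}$ are equal, which is precisely the claimed periodicity.

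I expect the main (though still modest) obstacle to be bookkeeping: making the pointwise-convergence induction clean. One has to set up coordinates on the frieze pattern — say indexing entries by their row and diagonal — verify that each entry of $\mathcal{F}_t$ (equivalently of the tropical pattern) is a finite iterate of the recurrence applied to the initial row and the bounding $0$s, and then argue by induction on the number of recurrence steps that $\log_t$ of the classical entry converges to the tropical entry, uniformly enough on the finite window of positions appearing in a single periodicity equation. The convergence $\log_t(t^b t^c+1)\to\max(b+c,0)$ is elementary, but one should note it is continuous in $(b,c)$ so the inductive step composes without trouble; since only finitely many entries are involved in any given instance of the periodicity identity, no uniformity subtleties arise beyond continuity. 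A secondary point worth a sentence is that the classical periodicity is usually stated for the doubly-infinite frieze; restricting attention to the finitely many entries named in the lemma is harmless. No genuinely hard step is anticipated — this is the ``take logarithms'' argument, and its only content is checking that the tropical pattern is the honest limit of classical ones.
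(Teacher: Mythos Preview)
Your proposal is correct and takes essentially the same approach as the paper: both deform the tropical frieze into a one-parameter family of classical friezes via an exponential/logarithmic substitution, invoke the Conway--Coxeter periodicity for each member of the family, and pass to the degenerating limit (you send $t\to\infty$ with base $t$, the paper sends $t\to 0$ with base $e$ and exponent $a_i/t$, which amounts to the same reparametrization). Your write-up is in fact more careful than the paper's about the inductive pointwise-convergence step, but the underlying argument is identical.
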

\begin{proof}
Pick a row $R = (a_0, a_1, \dots, a_\ell)$ of $\mathcal{TF}$. Let $e$ be the base of the natural logarithm.

For $t \in \R_{> 0}$, construct the classical frieze pattern containing the row $R' = (e^{a_0/t}, \dots, e^{a_n/t})$. Now take the logarithm of each entry of this frieze pattern and multiply each entry by $t$. Call the result $\mathcal{F}_t$. Note that $\mathcal{F}_t$ is not in general a frieze pattern; however, it does have the desired periodicity. Also observe that the row $R$ appears in each $\mathcal{F}_t$ as the image of $R'$. Now take $\lim_{t \to 0} \mathcal{F}_t$. This limit also contains the row $R$. This process converts the relation $d = (bc + 1) / a$ into the relation $d = \max \left( b + c, 0 \right) - a$, so $\lim_{t \to 0} \mathcal{F}_t = \mathcal{TF}$. Since each $\mathcal{F}_t$ has the desired periodicities, so does $\mathcal{TF}$.
\end{proof}

Let $T \in \twinc{n}{k}$. Recall from Section~\ref{sec:schroeder_paths} the K-theory growth diagram for $T$. Replace each Young diagram by 1 less than the difference between the lengths of its first and second rows. Delete the first and last number in each row (necessarily $-1$). We call the resulting array the \emph{jeu de taquin frieze pattern} of $T$. (It is obviously closely related to the height growth diagram.) Each row is an integer sequence encoding the tableau corresponding to that row of the K-theory growth diagram. (Indeed it is the height word with all terms decremented by 1, and the first and last terms removed.)

\begin{remark}\label{rem:tropical_friezes_corresponding_to_tableaux}
Observe that this map from increasing tableaux to integer sequences is injective. The image is exactly those sequences such that 
\begin{itemize}
\item[(1)] the first and last terms are 0,
\item[(2)] every term is $\geq -1$, 
\item[(3)] successive terms differ by at most 1, and
\item[(4)] there are no consecutive $-1$s.
\end{itemize} 

An integer sequence is the image of a standard Young tableau if it satisfies the stronger
\begin{itemize}
\item[($3'$)] successive terms differ by exactly 1,
\end{itemize} in place of condition (3).
\end{remark} 

\begin{example}\label{ex:tropical_frieze}
\ytableausetup{boxsize=1.1em}
For $T = \ytableaushort{1235,4567}$, we obtain the jeu de taquin frieze pattern

\[\begin{array}{ccccccccccccc}
0 & 1 & 2 & 1 & 1 & 0 & & & & &\\
& 0 & 1 & 0 & 0 & -1 & 0 & & & &\\
& & 0 & -1 & 0 & 0 & 1 & 0 & & &\\
& & & 0 & 1 & 1 & 2 & 1 & 0 & & \\
& & & & 0 & 0 & 1 & 0 & -1 & 0 & \\
& & & & & 0 & 1 & 0 & 0 & 1 & 0 \\
& & & & & & 0 & -1 & 0 & 1 & 0 & 0\\
& & & & & & & 0 & 1 & 2 & 1 & 1 & 0
\end{array}.\]
\end{example}

The fact that the first row, last row, and central column are all equal is equivalent to Lemma~\ref{lem:equality_of_first_row_last_column_in_hgd} and Theorem~\ref{thm:orbit_size}. The following lemma gives an alternate approach.

\begin{lemma}
A jeu de taquin frieze pattern is a subarray of a tropical frieze pattern.
\end{lemma}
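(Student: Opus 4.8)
The plan is to verify that the defining local relation of a tropical frieze pattern, namely $d = \max(b+c,0) - a$ for each $2\times 2$ subarray $\begin{smallmatrix} a & b \\ c & d\end{smallmatrix}$, holds for the entries of a jeu de taquin frieze pattern. Since a jeu de taquin frieze pattern is obtained from a K-theory growth diagram by decrementing the ``row-difference'' of each Young diagram by $1$ and deleting the bounding $-1$s, it suffices to work locally: take any $2\times 2$ square $\begin{smallmatrix} \lambda & \mu \\ \nu & \xi\end{smallmatrix}$ of Young diagrams appearing in a K-theory growth diagram, and check that if $a,b,c,d$ denote the respective values of (first row length minus second row length, minus one), then $d = \max(b+c,0) - a$. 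Once this local identity is established, the jeu de taquin frieze pattern sits inside the unique tropical frieze pattern extending any one of its rows (the bounding $-1$s playing the role of the bounding $0$s shifted, i.e.\ the row-difference-minus-one of $\emptyset$ is $-1$, so after the global $+1$ normalization these become the $0$ border of an honest tropical frieze); here one should be slightly careful to match conventions, but the content is the local relation.

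The key step is therefore the local computation, and I would carry it out by invoking Remark~\ref{rem:growth_rules} together with an explicit description of the K-jeu de taquin growth rule from \cite[Proposition~2.2]{thomas.yong:K} in the $2$-row rectangular setting. Concretely, for $2$-row shapes a Young diagram is determined by the pair $(\text{row 1 length}, \text{row 2 length})$, and the growth rule sends $(\lambda, \mu, \nu)$ to the $\xi$ determined by: $\xi$ is the smallest diagram containing both $\mu$ and $\nu$, \emph{except} that when $\mu$ and $\nu$ differ from $\lambda$ by boxes forced into the same cell one gets the K-theoretic ``doubling'' correction. Translating ``row-1-length minus row-2-length'' through this rule, the three cases (a box added in row 1, a box added in row 2, the K-theoretic case where no box is added) correspond exactly to the three branches of $\max(b+c,0)-a$: adding in row 1 increments the difference, adding in row 2 decrements it, and the combinatorics of when these additions can or must happen is governed precisely by whether $b+c$ exceeds $0$. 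I would present this as a short case analysis on the shape of $\lambda$ relative to $\mu,\nu$, each case being a one-line arithmetic check.

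The main obstacle I anticipate is purely bookkeeping: getting the normalization conventions to line up so that the ``$-1$ border'' of the jeu de taquin frieze pattern genuinely matches the ``$0$ border'' of a tropical frieze pattern, and making sure the local relation is stated with the entries in the correct NW/NE/SW/SE positions (the K-theory growth diagram is drawn with rows offset, so a ``$2\times 2$ square'' there is a little square of four diagrams, two from one tableau's sequence and two from the next). I would handle this by fixing the indexing $a_{ij}$ as in the height growth diagram discussion preceding Lemma~\ref{lem:equality_of_first_row_last_column_in_hgd}, subtracting $1$ throughout, and reading off the four entries of a square directly from that indexed array before doing the arithmetic. A secondary, minor point is that one must observe the border entries really are all $-1$ (equivalently, every row of a K-theory growth diagram starts and ends with $\emptyset$), which is immediate from the definition of the growth diagram, so no real work is needed there. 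Modulo these conventions, the proof reduces to the case check described above.
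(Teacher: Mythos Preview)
Your plan is essentially the paper's: verify that the bounding diagonals are $0$ and that the tropical relation $d=\max(b+c,0)-a$ holds on every $2\times 2$ block. The paper's only difference is that it derives the local relation by citing Lemma~\ref{lem:promotion_word_algorithm} (which describes how two consecutive rows of the height growth diagram differ), rather than re-deriving the two-row growth rule from \cite{thomas.yong:K}; these are the same case analysis in slightly different packaging.

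Where you go astray is the boundary. In the paper's definition the $-1$ entries (coming from $\emptyset$ on the left and the full rectangle $(n,n)$ on the right) are \emph{deleted}, not shifted. What remains as the border of the jeu de taquin frieze are the entries corresponding to the single box $(1)$ and to $[{\cdot}]_{M-1}$, and these are always $0$ because the labels $1$ and $M=2n-k$ each occur in exactly one row of any $2\times n$ increasing tableau (so the corresponding row-length difference is $1$, hence $0$ after subtracting $1$). There is no ``global $+1$ normalization'': adding $1$ throughout recovers the height growth diagram, whose $2\times 2$ blocks satisfy $d=\max(b+c,2)-a$, not the tropical frieze relation, so that array is \emph{not} a tropical frieze. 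The check you actually need for the border is that the second and second-to-last letters of every height word are $1$, which is the paper's ``It is clear that we have bounding diagonals of $0$s.''
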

\begin{proof}
It is clear that we have bounding diagonals of 0s. It suffices to verify the local defining relation $d = \max \left( b + c, 0 \right) - a$ on $2 \times 2$ subarrays. This follows fairly easily from the algorithmic relation of Lemma~\ref{lem:promotion_word_algorithm}.
\end{proof}

The next corollary follows immediately from the above; although it can be proven directly, the derivation from results on K-promotion seems more enlightening.

\begin{corollary}\label{cor:well_behaved_friezes}
Let $\mathcal{TF}$ be a tropical frieze diagram. 
\begin{itemize}
\item[(a)] If any row of $\mathcal{TF}$ satisfies the conditions (1), (2), ($3'$) of Remark~\ref{rem:tropical_friezes_corresponding_to_tableaux}, then every row of $\mathcal{TF}$ does.
\item[(b)]If any row of $\mathcal{TF}$ satisfies the four conditions (1), (2), (3), (4) of Remark~\ref{rem:tropical_friezes_corresponding_to_tableaux}, then every row of $\mathcal{TF}$ does.
\end{itemize}
 \qed
\end{corollary}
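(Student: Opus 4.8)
The statement to prove is Corollary~\ref{cor:well_behaved_friezes}, which asserts that the conditions (1)--(4) (and separately (1), (2), ($3'$)) of Remark~\ref{rem:tropical_friezes_corresponding_to_tableaux} are invariant under passing from one row of a tropical frieze pattern to any other row.

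Here is my plan.

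\textbf{Setup and strategy.} The key structural facts already assembled are: (i) a jeu de taquin frieze pattern is a subarray of a tropical frieze pattern (the preceding unnumbered lemma); (ii) the map from increasing tableaux of shape $2 \times n$ (for varying $n$, $k$) to integer sequences has image \emph{exactly} the set of sequences satisfying (1), (2), (3), (4), and the sequences satisfying the stronger ($3'$) are exactly the images of standard Young tableaux (Remark~\ref{rem:tropical_friezes_corresponding_to_tableaux}); and (iii) K-promotion permutes increasing tableaux (the Remark after Theorem~\ref{thm:orbit_size}, together with Lemma~\ref{lem:promotion_word_algorithm}), and within a jeu de taquin frieze pattern each row is the image of the K-promotion of the tableau encoded by the row above. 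So the plan is: \textbf{first}, reduce the statement about arbitrary tropical frieze patterns to the statement about jeu de taquin frieze patterns; \textbf{second}, observe that in a jeu de taquin frieze pattern the row-to-row passage is exactly K-promotion of increasing (resp.\ standard Young) tableaux, which by (iii) preserves the class, giving the claim.

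\textbf{Step 1: reduction to finitely many columns.} A tropical frieze pattern $\mathcal{TF}$ has rows of some fixed finite length $\ell$ (counting only the entries strictly between the two diagonals of $0$s; the hypothesis that ``a row satisfies (1)'' already forces the rows to be finite of a definite length, since (1) refers to ``the first and last terms''). Suppose some row $R$ of $\mathcal{TF}$ satisfies (1), (2), (3), (4). Pad $R$ on each end with the bordering $0$ to get a height word $S$ of length $\ell+2$; by Remark~\ref{rem:tropical_friezes_corresponding_to_tableaux} this is $S_T$ for a unique $T \in \twinc{n}{k}$ with $2n-k = \ell+1$. Now any row $R'$ of $\mathcal{TF}$ below $R$ is obtained by iterating the local tropical rule, and by the preceding lemma the jeu de taquin frieze pattern of $T$ (which has the same bounding $0$s and the same local rule) agrees with the corresponding block of $\mathcal{TF}$: more precisely, the first row of $\hgd{T}$ (decremented, trimmed) equals $R$, and since a tropical frieze pattern is determined by any one of its rows (the local rule being invertible in both directions, exactly as in Remark~\ref{rem:growth_rules}), the rows of $\mathcal{TF}$ strictly between the two $0$-diagonals through $R$ coincide with the rows of the jeu de taquin frieze pattern of $T$. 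Rows of $\mathcal{TF}$ further away are reached by repeating this argument (using the periodicity lemma, or simply noting the frieze extends the jeu de taquin frieze pattern in both directions by the same rule, since we can equally run the recurrence upward). Thus it suffices to check that every row of a jeu de taquin frieze pattern satisfies (1), (2), (3), (4).

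\textbf{Step 2: K-promotion preserves the class.} Each row of the jeu de taquin frieze pattern of $T$ encodes $\pro^{i}(T)$ for $i = 0, 1, \dots, 2n-k$. By the Remark following Theorem~\ref{thm:orbit_size}, each $\pro^{i}(T)$ is again an element of $\twinc{n}{k}$ (K-promotion permutes increasing tableaux), so by the ``exactly'' in Remark~\ref{rem:tropical_friezes_corresponding_to_tableaux} its height word satisfies (1)--(4), i.e.\ the corresponding row of the frieze satisfies (1)--(4). This proves (b). For (a), replace ``increasing tableau'' with ``standard Young tableau'': if the starting row satisfies ($3'$) as well, then $T \in \syt{2 \times n}$ (so $k=0$), and K-promotion preserves $\syt{2 \times n}$ (promotion and K-promotion coincide on standard Young tableaux, as noted in the introduction), so every row is the image of a standard Young tableau and hence satisfies ($3'$) by the last sentence of Remark~\ref{rem:tropical_friezes_corresponding_to_tableaux}. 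This proves (a).

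\textbf{Main obstacle.} The only genuinely delicate point is Step~1 --- making rigorous the claim that a row satisfying (1) pins down a finite-width tropical frieze pattern which then \emph{is} a jeu de taquin frieze pattern, rather than merely containing one. This needs the two-directional determinacy of the tropical local rule $d = \max(b+c,0) - a$ (solving instead for $a$ given $b,c,d$, which is again a max-plus expression) together with the bounding $0$-diagonals, so that knowing one row forces all rows; this is the tropical analogue of Remark~\ref{rem:growth_rules} and is immediate, but should be stated. Everything else is a direct appeal to already-established results.
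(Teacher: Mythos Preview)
Your proposal is correct and follows essentially the same approach the paper intends: the paper simply marks the corollary with a \qed, having noted just before that it ``follows immediately from the above'' (i.e., from Remark~\ref{rem:tropical_friezes_corresponding_to_tableaux} plus the lemma that jeu de taquin frieze patterns are tropical frieze patterns). You have spelled out exactly the intended argument---determinacy of a tropical frieze by any single row forces $\mathcal{TF}$ to coincide with the (periodic extension of the) jeu de taquin frieze pattern of the tableau encoded by that row, and K-promotion preserves the class of increasing (resp.\ standard) tableaux.

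One minor notational wrinkle: the ``integer sequence'' in Remark~\ref{rem:tropical_friezes_corresponding_to_tableaux} is already the frieze row itself (the decremented height word with its endpoints deleted), not the height word $S_T$. So you don't ``pad $R$ with bordering $0$s to get $S_T$''; rather, $R$ itself satisfies (1)--(4) iff it lies in the image of the tableau-to-frieze-row map, and that already gives the unique $T$. This doesn't affect the logic.
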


\begin{remark}
We speculate ahistorically that one could have discovered K-promotion for increasing tableaux in the following manner. First one could have found a proof of Theorem~\ref{thm:orbit_size} for standard Young tableaux along the lines of this section (indeed similar ideas appear in \cite{kirillov.berenstein}), proving Corollary~\ref{cor:well_behaved_friezes}(a) in the process. Looking for similar results, one might observe Corollary~\ref{cor:well_behaved_friezes}(b) experimentally and be lead to discover K-promotion in proving it.

Are there other special tropical frieze patterns hinting at a promotion theory for other classes of tableaux?
For example, tropical friezes containing a row satisfying conditions (1), (2), (3) of Remark~\ref{rem:tropical_friezes_corresponding_to_tableaux} seem experimentally to be well-behaved, with all rows having successive terms that differ by at most 2.
\end{remark}

We are able to prove the order of promotion on $\syt{3 \times n}$ in a similar fashion, using tropicalizations of the 2-frieze patterns of \cite{morier-genoud.ovsienko.tabachnikov}. Unfortunately we have been unable to extend this argument to $\inc{3 \times n}{k}$ for $k>0$. Example~\ref{ex:counterexample_to_rot=evac} suggests that such an extension would be difficult.

\section{Representation-theoretic interpretations}\label{sec:representation_theory}
In \cite{stembridge:evacuation}, J.~Stembridge proved that, for every $\lambda$, $(\syt{\lambda}, \mathcal{C}_2, f^\lambda(q))$ exhibits cyclic sieving, where the non-identity element of $\mathcal{C}_2$ acts by evacuation and $f^\lambda(q)$ is the standard $q$-analogue of the hook-length formula. We briefly recall the outline of this argument. Considering the Kazhdan--Lusztig cellular basis for the Specht module $V^\lambda$, the long element $w_0 \in S_{|\lambda|}$ acts (up to a controllable sign) on $V^\lambda$ by permuting the basis elements. Moreover under a natural indexing of the basis by $\syt{\lambda}$, the permutation is exactly evacuation. The cyclic sieving then follows by evaluating the character of $V^\lambda$ at $w_0$.

We can give an analogous proof of Theorem~\ref{thm:evacuation_CSP}. However to avoid redundancy we do not do so here, and instead derive Theorem~\ref{thm:evacuation_CSP} by direct reduction to J.~Stembridge's result. 

Let $\flag$ denote the map from $\twinc{n}{k}$ to $\syt{n-k, n-k, 1^k}$ from Proposition~\ref{prop:flag_bijection}. Theorem~\ref{thm:evacuation_CSP} follows immediately from the following proposition combined with J.~Stembridge's previously-described result.

\begin{proposition}\label{prop:evacuation_is_evacuation}
For all $T \in \twinc{n}{k}$, $\evac(\flag(T)) = \flag(\evac(T))$.
\end{proposition}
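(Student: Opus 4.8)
The plan is to show that the bijection $\flag$ intertwines the two notions of evacuation by tracking how each side acts on the combinatorial data that $\flag$ manipulates. Recall that $\flag$ separates the entries of $T \in \twinc{n}{k}$ into the set $A$ of doubled values, the set $B$ of values appearing in row~2 immediately right of an element of $A$, and the residual $2 \times (n-k)$ standard tableau $T'$; then it appends $B$ below the first column. The key observation I would exploit is Proposition~\ref{prop:evac=rot}: on $\twinc{n}{k}$, $\evac = \rot$, so the left-hand input $\evac(T)$ is simply the $180^\circ$ rotation of $T$ with alphabet reversed. I would first record what $\rot$ does to the triple $(A,B,T')$: a value $x$ is doubled in $\rot(T)$ iff $2n-k+1-x$ is doubled in $T$, so the doubled-set of $\rot(T)$ is $\{2n-k+1-a : a \in A\}$; and one checks from the local picture that the ``companion'' set $B$ transforms to $\{2n-k+1-b : b \in B\}$ as well (rotation swaps the roles of ``immediately right in row 2'' and the mirrored condition, which after alphabet reversal lands back on the same shifted set). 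Meanwhile $T'$, being the $2\times(n-k)$ rectangle left after deletion, transforms under $\rot$ exactly by ordinary rotation-plus-alphabet-reversal on $2\times(n-k)$ standard tableaux, which is ordinary evacuation $\evac$ there.

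Next I would analyze the right-hand side, $\flag(\evac(T))$. Since $\evac$ on $\syt{n-k,n-k,1^k}$ is an honest combinatorial operation (Schützenberger evacuation on a standard tableau of that flag shape), I need to understand how evacuation of the flag tableau $\flag(T)$ decomposes relative to its first two rows versus the tail in the first column. The natural tool is the description of evacuation via the sequence of shapes $([S]_j)_j$, or equivalently via jeu de taquin: I would argue that for a flag shape $(n-k,n-k,1^k)$, evacuation respects the decomposition into the $2\times(n-k)$ ``body'' and the length-$k$ ``tail,'' in the sense that the tail entries of $\evac(S)$ are the complements $2n-k+1-(\text{tail entries of }S)$ and the body of $\evac(S)$ is (roughly) the evacuation of the body — modulo the interaction caused by the fact that deleting the tail is not itself a jeu-de-taquin-stable operation. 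Concretely, I expect to show: if $B = \{b_1 < \dots < b_k\}$ is the tail of $\flag(T)$, then the tail of $\evac(\flag(T))$ is $\{2n-k+1-b_k < \dots < 2n-k+1-b_1\}$; combined with the first paragraph this matches the tail of $\flag(\rot(T)) = \flag(\evac(T))$.

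The cleanest route is probably to avoid a head-on structural comparison and instead use the growth-diagram / chain-in-Young's-lattice characterization of $\evac$ on both sides simultaneously. That is: encode $T$ by its chain $([T]_j)_j$ and $\flag(T)$ by its chain $([\flag(T)]_j)_j$; describe the map on chains induced by $\flag$ (it inserts, at the appropriate steps, the column boxes coming from $B$ and suppresses the doubled steps coming from $A$); then observe that $\evac$ on each side is ``reverse the chain and re-grow,'' and check that the $\flag$-induced map on chains commutes with this reversal. Here Lemma~\ref{lem:row_length_differences} and the height-word formalism of Section~\ref{sec:schroeder_paths} give a concrete handle: under $\flag$, upsteps/downsteps of $S_T$ become the $\pm1$ steps of the flag tableau's body and the horizontal steps (the set $A$) get ``unfolded'' into an up-then-later-down pair, with the tail $B$ read off from where the down half lands. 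I would push the identity $\evac = \rot$ through this unfolding.

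The main obstacle I anticipate is precisely the bookkeeping of the tail $B$ under evacuation of the flag shape: deleting the first-column tail from $\flag(T)$ to recover $T'$ is not a jeu-de-taquin move, so it is not a priori clear that $\evac$ of the flag tableau ``sees'' the tail and body separately. I would handle this by a direct jeu-de-taquin argument — sliding the tail boxes out one at a time and tracking that each slide through the $2\times(n-k)$ body, when performed in the evacuation order, produces exactly the complemented tail value — or, alternatively, by verifying the shape-chain commutation in the previous paragraph, which sidesteps the issue since there the tail boxes are simply extra steps in the chain that reverse along with everything else. A secondary check needed is that the set $B$ of $\rot(T)$ really is the complement of the set $B$ of $T$ and not some other companion set; this is a short local verification using the four local moves, but it must be done carefully because $A$ and $B$ can overlap and the ``not $s_j$ for $j<i$'' type conditions in the bijections are order-sensitive.
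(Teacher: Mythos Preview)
Your ``secondary check'' at the end is not secondary at all --- it is the whole point, and it fails. The set $B$ of $\rot(T)$ is \emph{not} $\{2n-k+1-b : b \in B\}$. Take the running example $T$ with rows $12456 / 23678$: here $A=\{2,6\}$ and $B=\{3,7\}$, while $\rot(T)$ has rows $12367 / 34578$, with doubled set $\{3,7\}$ and companion set $\{4,8\}$ --- which is \emph{not} the complement $\{2,6\}$ of $B$. What actually happens is that rotation sends ``immediately right in row~2'' to ``immediately left in row~1,'' so the $B$-set of $\rot(T)$ is the complement of the set $C := \{\,c : c \text{ is in row 1 of }T \text{ immediately left of some }a\in A\,\}$, a genuinely different companion set (here $C=\{1,5\}$, and indeed $\{9-1,9-5\}=\{4,8\}$). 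Consequently your expectation in the second paragraph --- that the tail of $\evac(\flag(T))$ is the complement of the tail of $\flag(T)$ --- is also false in this example, and the body/tail decomposition you hope for does not survive evacuation in the way you describe.

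The paper's proof confronts exactly this asymmetry. It introduces a \emph{skew} analogue $\skewflag(T)$ built from $C$ rather than $B$: delete $A$ from row~2 and $C$ from row~1 to get a $2\times(n-k)$ tableau $d(T)$, then attach $C$ \emph{above} in the last column, producing a tableau of skew shape $((n-k)^{k+2})/((n-k-1)^k)$. The easy observation is then $\rot(\flag(T)) = \skewflag(\rot(T))$, which encodes precisely the $B\leftrightarrow C$ swap under rotation that broke your approach. The substantive step (Lemma~\ref{lem:skew_flag_rectification}) is that $\flag(T)$ is the jeu de taquin rectification of $\skewflag(T)$; this is shown by an explicit RSK bumping argument, tracking that the letters bumped out of row~1 are exactly $A$ and those bumped out of row~2 are exactly $B$. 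Since evacuation of a standard tableau equals $\rot$ followed by rectification, one gets
\[
\evac(\flag(T)) \;=\; \mathrm{rect}\bigl(\rot(\flag(T))\bigr) \;=\; \mathrm{rect}\bigl(\skewflag(\rot(T))\bigr) \;=\; \flag(\rot(T)) \;=\; \flag(\evac(T)),
\]
the last equality by Proposition~\ref{prop:evac=rot}. So rather than trying to show evacuation of the flag tableau respects a body/tail split (it does not, as you yourself worried), the paper routes through a second, skew version of the flag map and lets rectification do the work of reconciling the two companion sets.
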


Proposition~\ref{prop:evacuation_is_evacuation} was first suggested to the author by B.~Rhoades, who also gave some ideas to the proof. Before we prove this result, we introduce some additional notation. Observe that if $S \in \syt{n-k, n-k, 1^k}$, then $\rot(S)$ has skew shape $((n-k)^{k+2}) / ((n-k-1)^k)$. 
Let $T \in \twinc{n}{k}$. Let $A$ be the set of numbers that appear twice in $T$. Let $C$ be the set of numbers that appear in the first row immediately left of an element of $A$. 
Let $d(T)$ be the tableau of shape $(n-k,n-k)$ formed by deleting all elements of $A$ from the second row of $T$ and all elements of $C$ from the first. 
A tableau $\skewflag(T)$ of skew shape $((n-k)^{k+2}) / ((n-k-1)^k)$ is given by attaching $C$ in the $k$th column of $d(T)$. Figure~\ref{fig:skew_flag} illustrates these maps.
It is immediate from comparing the definitions that $\rot(\flag(T)) = \skewflag(\rot(T)).$ 

\begin{figure}[h]
	\begin{subfigure}[b]{0.45\textwidth}
		\centering
		\ytableaushort{12347,45678}
		\caption{Increasing tableau $T$}
	\end{subfigure}
	\begin{subfigure}[b]{0.45\textwidth}
		\centering
		\ytableaushort{123,467,5,8}
		\caption{Flag-shaped tableau $\flag(T)$}
	\end{subfigure} 
	\begin{subfigure}[b]{0.45\textwidth}
		\centering
		\ytableaushort{127,568}
		\caption{$d(T)$}
	\end{subfigure}
	\begin{subfigure}[b]{0.45\textwidth}
		\centering
		\ytableaushort{\none \none 3, \none \none 4, 127,568}
		\caption{$\skewflag(T)$}
	\end{subfigure}
	\caption{An illustration of the maps $\flag$, $d$, and $\skewflag$ on 2-row rectangular increasing tableaux.}\label{fig:skew_flag}
\end{figure}

\begin{lemma}\label{lem:skew_flag_rectification}
For all $T \in \twinc{n}{k}$, $\flag(T)$ is the rectification of $\skewflag(T)$.
\end{lemma}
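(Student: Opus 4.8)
The plan is to compare the rectification of $\skewflag(T)$ with $\flag(T)$ by tracking what jeu de taquin does to the appended column $C$. Recall that $\skewflag(T)$ has skew shape $((n-k)^{k+2})/((n-k-1)^k)$: its bottom two rows form $d(T)$, the rectangular tableau of shape $(n-k,n-k)$ obtained by deleting $A$ from row~2 and $C$ from row~1 of $T$, and the top $k$ boxes (in column $n-k$) carry the entries of $C$ in increasing order from top to bottom. Dually, $\flag(T)$ is obtained by deleting $A$ from row~1 and $B$ from row~2 of $T$ (where $B$ is the set of entries of row~2 immediately right of an element of $A$), and then attaching $B$ down the first column below the resulting $(n-k,n-k)$ rectangle.

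First I would set up the rectification as a sequence of $k$ reverse-slides, one for each box of $C$, peeling the column $C$ off from the bottom upward (equivalently, filling in the inner corners of $((n-k-1)^k)$ one at a time). The key claim to establish is that rectifying $\skewflag(T)$ produces exactly $\flag(T)$; since jeu de taquin rectification is well-defined (independent of slide order) this suffices. I expect the cleanest route is to identify $\skewflag(T)$ and $\flag(T)$ as the two tableaux one naturally attaches to the \emph{same} underlying pair of row-words, so that the statement becomes an instance of the standard fact that a skew tableau and its rectification are Knuth-equivalent. Concretely: the reading word of $\skewflag(T)$ is $C$ (top-to-bottom, i.e.\ increasing) followed by the reading word of $d(T)$; I would show this word is Knuth-equivalent to the reading word of $\flag(T)$, namely $B$ (increasing, read up the first column) followed by the reading word of $T'$ (the rectangle of Proposition~\ref{prop:flag_bijection}). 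A bijective way to see the equivalence is to use the reconstruction procedures already proved: inserting $A$ and $B$ back into $T'$ recovers $T$, and inserting $A$ and $C$ back into $d(T)$ also recovers $T$ (this is the content of the two reversible algorithms in the proofs of Propositions~\ref{prop:flag_bijection} and, implicitly, the ``$d$'' variant), so both skew/flag tableaux rectify to the unique semistandard tableau Knuth-equivalent to the row reading word of $T$ — and since $T$ has strictly increasing columns of height $2$, that rectification is forced to be the flag shape $(n-k,n-k,1^k)$ with the displayed filling.

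Alternatively, and perhaps more in keeping with the elementary spirit of the paper, I would argue directly by induction on $k$: perform the reverse slide that moves the single box of $C$ in column $n-k$, row~$2$ (the bottom box of the attached column). Tracking this slide through the rectangle $d(T)$ using the definitions of $A$ and $C$, one checks that it deposits that entry of $C$ at the bottom of the first column and shifts the rectangle back toward the shape of $\flag$ restricted to a smaller $k$; then apply the inductive hypothesis. The main obstacle I anticipate is precisely this bookkeeping: verifying that each reverse slide routes the entry of $C$ all the way down the first column without disturbing the relative order that makes the output semistandard, i.e.\ that the interaction between the set $C$ (left-neighbors of doubled entries) and the rows of $d(T)$ is exactly complementary to the interaction between $B$ (right-neighbors of doubled entries) and the rows of $T'$. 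Once that compatibility is pinned down — most efficiently via the Knuth-equivalence / reading-word argument above, which sidesteps the slide-by-slide analysis — the lemma follows.
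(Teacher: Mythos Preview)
Your instinct to show that $\skewflag(T)$ and $\flag(T)$ lie in the same plactic class is exactly the paper's approach, but the execution has real gaps.

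First, the reading word of $\skewflag(T)$ is misstated. With the standard convention (rows read bottom to top, left to right within each row), the reading word is that of $d(T)$ \emph{followed} by the entries of $C$ in \emph{decreasing} order (you read up the attached column), not $C$ increasing followed by $d(T)$. This matters, because the paper's whole computation is to RSK-insert that word: the first $2(n-k)$ letters insert to $d(T)$ itself, and then one Schensted-bumps the elements of $C$, largest first, into $d(T)$.

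Second, the step ``both skew/flag tableaux rectify to the unique semistandard tableau Knuth-equivalent to the row reading word of $T$'' is a non-sequitur. The reconstruction procedures in Proposition~\ref{prop:flag_bijection} and its $d$-variant are set bijections; they have no built-in relation to plactic equivalence, and there is no reason the insertion tableau of the reading word of the increasing tableau $T$ (which has repeated letters) should equal $\flag(T)$. You cannot shortcut the bumping. The inductive slide argument is similarly underspecified: jeu de taquin slides fill inner corners at the upper left, and the claim that an entry of $C$ ``routes all the way down the first column'' is precisely the content to be proved, not something one can just check.

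What the paper does is carry out the bumping explicitly. When $i\in C$ is inserted into row~1, it bumps the unique $j\in A$ immediately to its right in $T$ (which is still present in row~1 because larger elements of $C$ have already bumped only larger elements of $A$). That $j$ then bumps from row~2 the element of $B$ immediately to its right in row~2 of $T$, and this $B$-element drops into the leg. Iterating over $C$ in decreasing order, the first row ends up as row~1 of $T$ minus $A$, the second row as row~2 of $T$ minus $B$, and the leg consists of $B$ in increasing order---precisely $\flag(T)$. This bumping bookkeeping is the missing substance of your proposal.
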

\begin{proof}
It is enough to show that $\flag(T)$ and $\skewflag(T)$ lie in the same plactic class. Consider the row reading word of $\skewflag(T)$. Applying the RSK algorithm to the first $2(n-k)$ letters of this word, we obtain the tableau $d(T)$. The remaining letters are those that appear in the first row of $T$ immediately left of a element that appears in both rows. These letters are in strictly decreasing order. It remains to Schensted bump these remaining letters into $d(T)$ in strictly decreasing order, and observe that we obtain the tableau $\flag(T)$.

Suppose we first bump in the letter $i$. By assumption $i$ appears in the first row of $T$ immediately left of an element $j$ that appears in both rows. Since $i$ is the biggest such letter, $j$ appears in the first row of $d(T)$. Hence $i$ bumps $j$ out of the first row. We then bump $j$ into the second row. The element that it bumps out of the second row is the least element greater than it. This is precisely the element $h$ immediately to the right of $j$ in the second row of $T$. 

Repeating this process, since the elements we bump into $d(T)$ are those that appear in the first row of $T$ immediately left of elements that appear in both rows, we observe that the elements that are bumped out of the first row are precisely those that appear in both rows of $T$. Hence the first row of the resulting tableau consists exactly of those elements that appear only in the first row of $T$. Thus the first row of the rectification of $\skewflag(T)$ is the same as the first row of $\flag(T)$.

Moreover, since the elements bumped out of the first row are precisely those that appear in both rows of $T$, these are also exactly the elements bumped into the second row. Therefore the elements bumped out of the second row are exactly those that appear in the second row of $T$ immediately right of an element that appears twice. Thus the second row of the rectification of $\skewflag(T)$ is also the same as the second row of $\flag(T)$.

Finally, since elements are bumped out of the second row in strictly decreasing order, the resulting tableau has the desired shape $(n-k, n-k, 1^k)$. Thus $\flag(T)$ is the rectification of $\skewflag(T)$.
\end{proof}

\begin{proof}[Proof of Proposition~\ref{prop:evacuation_is_evacuation}]
Fix $T \in \twinc{n}{k}$. Recall from Proposition~\ref{prop:evac=rot} that $\evac(T) = \rot(T)$. Evacuation of standard Young tableaux can be defined as applying $\rot$ followed by rectification to a straight shape. Hence to prove $\evac(\flag(T)) = \flag(\evac(T))$, it suffices to show that $\flag(\rot(T))$ is the rectification of $\rot(\flag(T))$. We observed previously that $\rot(\flag(T)) = \skewflag(\rot(T))$. Hence Lemma \ref{lem:skew_flag_rectification} completes the proof by showing that $\skewflag(\rot(T))$ rectifies to $\flag(\rot(T))$.
\end{proof}

B.~Rhoades' proof of cyclic sieving for $\syt{m \times n}$ under promotion follows the same general structure as J.~Stembridge's proof for $\syt{\lambda}$ under evacuation. That is, he considers the Kazhdan--Lusztig cellular representation $V^{m \times n}$ with basis indexed by $\syt{m \times n}$ and looks for an element $w \in S_{mn}$ that acts (up to scalar multiplication) by sending each basis element to its promotion. It turns out that the long cycle $w = (123\dots mn)$ suffices.

Given our success interpreting Theorem~\ref{thm:evacuation_CSP} along these lines, one might hope to prove Theorem~\ref{thm:promotion_CSP} as follows. Take the Kazhdan--Lusztig cellular representation $V^{(n-k,n-k,1^k)}$ and index the basis by $\twinc{n}{k}$ via the bijection of Proposition~\ref{prop:flag_bijection}. Then look for an element $w \in S_{2n-k}$ that acts (up to scalar multiplication) by sending each basis element to its K-promotion. With B.~Rhoades, the author investigated this approach. Unfortunately we found by explicit computation that $w_0$ is generally the only element of $S_{2n-k}$ acting on $V^{(n-k,n-k,1^k)}$ as a permutation matrix (even up to scalar multiplications). This does not necessarily mean that no element of the group algebra could play the role of $w$. However prospects for this approach seem to us dim. We prove Theorem~\ref{thm:promotion_CSP} in the next section by elementary combinatorial methods.

\section{Proof of Theorem \ref{thm:promotion_CSP}}\label{sec:CSP}
Recall definition (\ref{eq:f}) of $f(q)$. Our strategy (modeled throughout on \cite[$\mathsection$7]{reiner.stanton.white}) is to explicitly evaluate $f$ at roots of unity and compare the result with a count of increasing tableaux. To count tableaux, we use the bijection with noncrossing partitions given in Proposition \ref{prop:partition_bijection}. We will find that the symmetries of these partitions more transparently encode the promotion ranks of the corresponding tableaux.

\begin{lemma}\label{rem:tedious_calculation}
Let $\zeta$ be any primitive $d$th root of unity, for $d$ dividing $2n-k$. 
Then \[ f(\zeta) = \begin{cases} \displaystyle \frac{(\frac{2n-k}{d})!}{(\frac{k}{d})! (\frac{n-k}{d})!(\frac{n-k}{d}-1)!\frac{n}{d}}, & \mathrm{if} \, d | n \\[20pt] 
\displaystyle \frac{(\frac{2n-k}{d})!}{(\frac{k+2}{d} - 1)!(\frac{n-k-1}{d})!(\frac{n-k-1}{d})!\frac{n+1}{d}}, & \mathrm{if} \, d | n+1 \\[20pt]
0, & \mathrm{otherwise}. \end{cases} \]
\end{lemma}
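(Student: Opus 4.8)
The plan is to evaluate the $q$-rational function $f(q)$ directly at a primitive $d$th root of unity $\zeta$ by carefully tracking what happens to each factor. Recall that $f(q) = \frac{1}{[n-k]_q}\qbinom{n-1}{k}\qbinom{2n-k}{n-k-1}$, so unraveling the $q$-binomials we must understand the limiting behavior of $[m]_q!$ and of individual factors $[j]_q$ as $q \to \zeta$. The standard tool (exactly as in \cite[$\mathsection$7]{reiner.stanton.white}) is the observation that $[j]_\zeta = 0$ precisely when $d \mid j$, and more precisely that for a product of consecutive $q$-integers the number of vanishing factors is governed by how many multiples of $d$ occur in the range; when one writes $q$-factorials as products and cancels, a surviving ratio of the form $\lim_{q\to\zeta}\frac{[dm]_q!}{\text{stuff}}$ collapses to an ordinary factorial ratio in the variables $\frac{2n-k}{d}, \frac{k}{d}$, etc.

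First I would reduce everything to a single limit computation: write $\qbinom{n-1}{k} = \frac{[n-1]_q!}{[k]_q![n-1-k]_q!}$ and $\qbinom{2n-k}{n-k-1} = \frac{[2n-k]_q!}{[n-k-1]_q![n+1]_q!}$, so that
\[
f(q) = \frac{[n-1]_q!\,[2n-k]_q!}{[n-k]_q\,[k]_q!\,[n-1-k]_q!\,[n-k-1]_q!\,[n+1]_q!}.
\]
Next I would count, for the numerator and denominator separately, the total number of factors divisible by $d$ among all these $q$-factorials and the lone $[n-k]_q$; the key arithmetic input is $d \mid 2n-k$, so $2n \equiv k \pmod d$, which links the ``$d\mid n$'' and ``$d\mid n+1$'' dichotomy to divisibility of $k$, $n-k$, $n-k-1$ by $d$. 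In the case $d \mid n$: since $2n-k \equiv 0$ and $n \equiv 0$ we get $d \mid k$, hence $d \mid n-k$; one then checks the orders of vanishing match between numerator and denominator (no zero, no pole), and the nonvanishing part is extracted using $\lim_{q\to\zeta}\frac{[dm]_q!}{[d]_q^m\,m!\cdot(\text{lower-order }q\text{-integers})}$-type identities, ultimately yielding $\frac{(\frac{2n-k}{d})!}{(\frac{k}{d})!(\frac{n-k}{d})!(\frac{n-k}{d}-1)!\frac{n}{d}}$. The factor $\frac{n}{d}$ in the denominator comes from $[n-k]_q\cdot[n+1]_q!$ contributing one more vanishing factor (at $n$, via $[n]_q$ inside $[n+1]_q!$, or rather via bookkeeping of which index equals a multiple of $d$) than the numerator does, and $[n-1]_q!$ accounts for one cancellation. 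The case $d \mid n+1$ is symmetric: now $[n+1]_q!$ has an ``extra'' multiple of $d$ at $n+1$, $d \nmid n$, and one finds $d \mid k+2$ and $d \mid n-k-1$, giving the second formula with the $\frac{n+1}{d}$ denominator. In all remaining cases the vanishing orders of numerator and denominator differ so that $f(\zeta)=0$; here one must be slightly careful that it is really a zero and not a pole, but since $f$ is a polynomial (it equals the $\maj$-generating function of Theorem~\ref{thm:enumeration} up to the monomial prefactor, hence has nonnegative integer coefficients) it can never have a pole, so any mismatch forces the value $0$.

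The main obstacle I expect is the bookkeeping of orders of vanishing: one must count, over the seven $q$-factorial/$q$-integer factors, exactly how many arguments are multiples of $d$, and show the numerator count exceeds the denominator count by exactly one in the off-cases and equals it in the two main cases. This is delicate because it depends on the residues of $n$ and $k$ modulo $d$ and on the constraint $2n \equiv k$, and the ranges $\{1,\dots,n-1\}$ versus $\{1,\dots,n+1\}$ differ by boundary terms that are precisely the multiples of $d$ making the difference. Once the counts are settled, the remaining step — identifying the surviving product of nonvanishing $q$-integers evaluated at $\zeta$ with the claimed ordinary factorial expression — is routine: it follows from the elementary fact that $\prod_{j=1,\,d\nmid j}^{dm}[j]_\zeta$ and $[d]_\zeta^{?}$-type terms are the same across numerator and denominator and cancel, leaving the ratio of $(\text{arg}/d)!$'s. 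I would relegate this entire computation to a single displayed chain of equalities, flagging it (as the lemma's name already does) as tedious but mechanical.
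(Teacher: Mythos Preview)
Your proposal is correct and follows essentially the same route as the paper: both arguments rest on the two standard facts that $[j]_\zeta=0$ iff $d\mid j$ and that $\lim_{q\to\zeta}[j]_q/[j']_q$ equals $j/j'$ or $1$ according to whether $d\mid j$, then reduce the evaluation to a careful but routine bookkeeping of which indices in the expanded product of $q$-factorials are multiples of $d$. The paper's proof is terser (it simply cites these two facts from \cite[$\mathsection 7$]{reiner.stanton.white} and declares the rest an ``easy but tedious calculation''), whereas you spell out the case split via $2n\equiv k\pmod d$ and invoke polynomiality of $f$ to exclude poles; but there is no substantive difference in strategy.
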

\begin{proof}
As in \cite[$\mathsection 7$]{reiner.stanton.white}, we observe that 
\begin{itemize}
\item for all $j \in \N$, $\zeta$ is a root of $[j]_q$ if and only if $d > 1$ divides $j$, and
\item for all $j, j' \in \N$ with $j \equiv j' \mod d$, 
\[\lim_{q \to \zeta} \frac{[j]_q}{[j']_q} = \begin{cases} \displaystyle \frac{j}{j'}, & \mathrm{if} \, j \equiv 0 \mod d \\[18pt]
1, & \mathrm{if} \, j \not \equiv 0 \mod d. \end{cases}
\]
\end{itemize}
The desired formula is then an easy but tedious calculation analogous to those carried out in \cite[$\mathsection 7$]{reiner.stanton.white}.
\end{proof}

We will write $\pi$ for the bijection of Proposition \ref{prop:partition_bijection} from $\twinc{n}{k}$ to noncrossing partitions of $2n-k$ into $n-k$ blocks all of size at least 2. For $\Pi$ a noncrossing partition of $N$, we write $\mathcal{R}(\Pi)$ for the noncrossing partition given by rotating $\Pi$ clockwise by $2 \pi / N$.

\begin{lemma}\label{lem:promotion_is_rotation_of_partitions}
For any $T \in \twinc{n}{k}$, $\pi(\pro(T)) = \mathcal{R}(\pi(T))$.
\end{lemma}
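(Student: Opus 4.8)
The plan is to track a single entry through K-promotion on $T$ and simultaneously through rotation on $\pi(T)$, showing the two operations produce matching combinatorial data. Recall from Proposition~\ref{prop:partition_bijection} that $\pi(T)$ is built by pairing each element $i$ in the second row of $T$ with $s_i$, the largest element in the first row less than $i$ that has not already been used as some $s_j$ with $j < i$; the singletons-free noncrossing partition is the transitive closure of these pairs. Meanwhile K-promotion deletes the entry $1$, slides along the flow path $\phi(T)$, fills with $M+1 = 2n-k+1$, and decrements. The clockwise rotation $\mathcal{R}$ by $2\pi/(2n-k)$ sends element $a$ to $a-1$ for $a > 1$ and sends $1$ to $2n-k$. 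So the natural correspondence is: an element $a > 1$ of a block of $\pi(T)$ should go to $a - 1$ in $\pi(\pro(T))$, except that the ``$1$'' of $\pi(T)$ must re-enter as the new maximal element $2n-k$, joining whichever block structure is forced.

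First I would set up the height-word description of the flow path from Lemma~\ref{lem:promotion_word_algorithm}, since the flow path is exactly what determines which rows each value occupies before and after promotion. Writing $S_T$ in the form $0w_10w_3$ or $0w_11w_20w_3$ as in that lemma, the decrement/increment pattern tells me precisely which values switch rows: in the $0w_10w_3$ case only the value at position $\ell_1+1$ (the box $(2,m)$) changes row, and in the general case the structure is governed by the rectangle $[T]_{\ell_1+1} = (p+1,p)$. Translating this into the pairing data $\{(i, s_i)\}$: decrementing every entry of $T$'s row content by one step of promotion shifts every second-row value $i \mapsto i-1$ and every first-row value $j \mapsto j-1$, which is exactly what $\mathcal{R}$ does on the matched pairs, provided the value $1$ (which was deleted) and the value $2n-k$ (which was inserted) are handled consistently. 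The key local claim is that if value $1$ occupied only the first row of $T$ (i.e.\ $1$ is a block-minimum, the generic case since $1$ is always the smallest element), then after promotion the re-inserted maximum $2n-k$ lands in a row configuration making $\{2n-k\}$ absorb into the rotated image of $1$'s block correctly; and one checks the edge case where $1$ appeared in both rows.

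The main step is to verify that the greedy pairing rule commutes with this shift, i.e.\ that $s_{i-1}^{\pro(T)} = s_i^T - 1$ for the appropriate range of $i$, with the boundary element wrapping around. This is where I expect the real work: the definition of $s_i$ is greedy and non-local (it depends on which first-row elements have already been claimed by smaller second-row elements), so I must argue that deleting the entry $1$, performing the jeu de taquin slide, and decrementing does not disturb the relative order in which first-row elements get claimed — only shifts all labels and cyclically relocates the one deleted/inserted label. I would do this by inducting along the flow path, using that the flow path consists of an initial segment of row $1$ and a terminal segment of row $2$ (from Lemma~\ref{lem:promotion_word_algorithm}'s analysis), so the values that change rows form a controlled ``staircase'' and the pairing is reorganized in precisely the way rotation predicts. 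An alternative, possibly cleaner route: use the height-word characterization and the already-established $S_{\pro(T)}$ formula together with a direct description of $\pi$ in terms of $S_T$ (block-minimum $\leftrightarrow$ upstep, block-maximum $\leftrightarrow$ downstep, intermediate $\leftrightarrow$ horizontal step, with nesting read off from the path), reducing the claim to the statement that $\pro$ on height words corresponds to cyclically rotating the step sequence — which is visually immediate once the ``wrap'' of the first step to the end is identified with the $0 \mapsto$ last-position move in Lemma~\ref{lem:promotion_word_algorithm}(b).

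The main obstacle is bookkeeping the greedy matching $s_i$ under the slide; I anticipate needing a lemma saying that in $T$, reading left to right, the matching $\{(i,s_i)\}$ is determined by a last-in-first-out (stack) discipline on unclaimed first-row entries, and then observing that deleting the globally smallest entry and reinserting a globally largest entry is exactly a cyclic shift of the stack history — which is the combinatorial heart of why rotation of noncrossing partitions appears. Once that stack reformulation of $\pi$ is in hand, the proof of Lemma~\ref{lem:promotion_is_rotation_of_partitions} becomes essentially a one-line consequence, and it also dovetails with the height-word picture used elsewhere in Section~\ref{sec:schroeder_paths}.
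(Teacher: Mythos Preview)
Your ``alternative, possibly cleaner route'' is simply false: K-promotion does \emph{not} cyclically rotate the step sequence of the Schr\"oder path. Take $T=\begin{smallmatrix}1&2\\3&4\end{smallmatrix}$, so $S_T=01210$ with step sequence $UUDD$. By Lemma~\ref{lem:promotion_word_algorithm}(b), $S_{\pro(T)}=01010$ with step sequence $UDUD$, whereas the cyclic rotation of $UUDD$ is $UDDU$. The height-word transformation in Lemma~\ref{lem:promotion_word_algorithm}(b) genuinely rearranges step types near the join of $w_1$ and $w_3$, so you cannot read off the rotation of $\pi(T)$ from a rotation of steps.

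Your primary approach also has a gap. The identity $s_{i-1}^{\pro(T)}=s_i^{T}-1$ is not even well-posed for all $i$: the set of values appearing in the second row changes under K-promotion, so $s_{i-1}^{\pro(T)}$ may be undefined when $s_i^{T}$ is (in the example above, $s_4^{T}=1$ but $3$ is not in the second row of $\pro(T)$). The greedy matching does \emph{not} simply shift by one; the elements of the block containing $1$ change row-occupancy (``type'') in a way your proposal does not account for. Your acknowledgement that ``this is where I expect the real work'' is correct, but you have misidentified what that work consists of: it is not a uniform shift argument along the flow path, but rather an analysis of which specific values change from first-row-only to second-row-only (or to/from both rows), and these are exactly the block-mates of $1$.

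The paper's proof exploits precisely this. It uses the \emph{inverse} description of $\pi$ from Proposition~\ref{prop:partition_bijection}: a value is a block-minimum iff it lies only in row~1, a block-maximum iff only in row~2, and intermediate iff in both rows. One then checks directly from the mechanics of the K-jeu de taquin slide that the only values whose row-occupancy changes in passing from $T$ to the intermediate filling $T'$ (before decrementing) are those in the block of $1$, and the changes are exactly those that rename $1$ as $2n-k+1$ in the partition. This sidesteps the greedy-matching bookkeeping entirely and gives a short argument. Your stack/LIFO reformulation could be made to work, but it would ultimately have to prove the same type-change statement, and it is the less direct path.
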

\begin{proof}
We think of K-promotion as taking place in two steps. In the first step, we remove the label 1 from the tableau $T$, perform K-jeu de taquin, and label the now vacated lower-right corner by $2n-k+1$. Call this intermediate filling $T'$. In the second step, we decrement each entry by one to obtain $\pro(T)$. 

The filling $T'$ is not strictly an increasing tableau, as no box is labeled 1. However, by analogy with the construction of Proposition \ref{prop:partition_bijection}, we may associate to $T'$ a noncrossing partition of $\{2, \dots, 2n-k+1\}$. For each $i$ in the second row of $T'$, let $s_i$ be the largest number in the first row that is less than $i$ and that is not $s_j$ for some $j<i$. The noncrossing partition $\pi(T')$ is formed by declaring $i$ and $s_i$ to be in the same block. 

\begin{claim}$\pi(T')$ may be obtained from $\pi(T)$ merely by renaming the element 1 as $2n-k+1$. 
\end{claim}

In $T$, there are three types of elements: \begin{enumerate}[(A)]
\item those that appear only in the first row, 
\item those that appear only in the second, and 
\item those that appear in both. \end{enumerate} Most elements of $T'$ are of the same type as they were in $T$. In fact, the only elements that change type are in the block of $\pi(T)$ containing 1. If that block contains only one element other than 1, this element changes from type (B) to type (A). If the block contains several elements besides 1, the least of these changes from type (C) to type (A) and the greatest changes from type (B) to type (C). All other elements remain the same type. Observing that the element 1 was of type (A) in $T$ and that $2n-k+1$ is of type (B) in $T'$, this proves the claim. \qed

By definition, $\pi(\pro(T))$ is obtained from $\pi(T')$ merely by decrementing each element by one. However by the claim, $\mathcal{R}(\pi(T))$ is also obtained from $\pi(T')$ by decrementing each element by one. Thus $\pi(\pro(T))  = \mathcal{R}(\pi(T))$.
\end{proof}

It remains now to count noncrossing partitions of $2n-k$ into $n-k$ blocks all of size at least 2 that are invariant under rotation by $2 \pi /d$, and to show that we obtain the formula of Lemma \ref{rem:tedious_calculation}. We observe that for such rotationally symmetric noncrossing partitions, the cyclic group $\mathcal{C}_d$ acts freely on all blocks, except the central block (the necessarily unique block whose convex hull contains the center of the circle)  if it exists. Hence there are no such invariant partitions unless $n-k \equiv 0$ or $1 \mod d$, in agreement with Lemma \ref{rem:tedious_calculation}.

Arrange the numbers $1,2, \dots, n, -1, \dots, -n$  counterclockwise, equally-spaced around a circle. Consider a partition of these points such that, for every block $B$, the set formed by negating all elements of $B$ is also a block. If the convex hulls of the blocks are pairwise nonintersecting, we call such a partition a \emph{noncrossing $B_n$-partition} or \emph{type-B noncrossing partition} (\emph{cf.}~\cite{reiner}). Whenever we say that a type-B noncrossing partition has $p$ pairs of blocks, we do not count the central block. There is an obvious bijection between noncrossing partitions of $2n-k$ that are invariant under rotation by $2 \pi /d$ and noncrossing $B_{(2n-k)/d}$-partitions. Under this bijection a noncrossing partition $\Pi$ with $n-k$ blocks corresponds to a type-B noncrossing partition with $\frac{n-k}{d}$ pairs of blocks if $d$ divides $n-k$ (that is, if $\Pi$ has no central block), and corresponds to a type-B noncrossing partition with $\frac{n-k-1}{d}$ pairs of blocks if $d$ divides $n-k-1$ (that is, if $\Pi$ has a central block). The partition $\Pi$ has singleton blocks if and only if the corresponding  type-B partition does. 

\begin{lemma}\label{lem:enumeration_of_type_B_partitions}
The number of noncrossing $B_N$-partitions with $p$ pairs of blocks without singletons and \emph{without} a central block is $$\sum_{i=0}^p (-1)^i \binom{N}{i} {\binom{N-i}{p-i}}^2 \frac{p-i}{N-i}.$$ The number of such partitions \emph{with} a central block is $$\sum_{i=0}^p (-1)^i \binom{N}{i} {\binom{N-i}{p-i}}^2 \frac{N-p}{N-i}.$$
\end{lemma}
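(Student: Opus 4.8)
The plan is to prove both formulas by inclusion-exclusion over a set of "forbidden" singleton blocks, starting from a known count of \emph{all} noncrossing $B_N$-partitions (singletons allowed) with a prescribed number of pairs of blocks. I would first recall (or rederive from \cite{reiner}) the basic enumeration: the number of noncrossing $B_N$-partitions with exactly $p$ pairs of non-central blocks and no central block is $\binom{N}{p}^2$, and the number with exactly $p$ pairs of non-central blocks together with a central block is $\binom{N}{p}^2 \frac{N-p}{?}$ — more precisely I would look up the refined Reiner numbers, which are a type-$B$ analogue of the Kreweras/Narayana numbers. The cleanest route is: for noncrossing $B_N$-partitions, the number with $p$ non-central pairs of blocks and no central block equals $\frac{p}{N}\binom{N}{p}^2$-type expressions... actually the standard fact I would invoke is that the number of noncrossing $B_N$-partitions in which the non-central blocks form $p$ pairs is a product of binomial coefficients, so that the "without singletons" refinement is obtained by Möbius inversion on the lattice of which elements are singletons.

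Concretely, here is the inclusion-exclusion. Fix a target of $p$ pairs of blocks, no singletons. For a subset $I$ of the $N$ "positive" positions with $|I|=i$, declare the $i$ positions in $I$ (and their negatives) to be singletons, and count noncrossing $B_N$-partitions on the remaining $2(N-i)$ points having $p-i$ pairs of non-central blocks with \emph{no restriction} on further singletons; by the basic type-$B$ enumeration this count is $\binom{N-i}{p-i}^2$ in the no-central-block case and $\binom{N-i}{p-i}^2\frac{N-p}{N-i}$ in the central-block case (these are exactly the Reiner-number expressions, which I would cite). Then the number with \emph{no} singletons at all is $\sum_{i\ge 0}(-1)^i\binom{N}{i}\,(\text{that count})$; after inserting the factor $\frac{p-i}{N-i}$ (resp.\ $\frac{N-p}{N-i}$) coming from the refined count one obtains exactly the two displayed sums. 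The one subtlety is that a position forced to be a singleton cannot simultaneously be the center, so in the central-block case the inclusion-exclusion is over the $N$ non-central positive positions while the central block lives on a fixed antipodal pair; I would check that the arithmetic of $\binom{N}{i}$ versus $\binom{N-1}{i}$ comes out as stated (it does, because choosing which of the $N$ positive slots are singletons is independent of the central block, which occupies the slot diametrically opposite a gap).

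The step I expect to be the genuine obstacle is pinning down the exact form of the \emph{refined} type-$B$ Narayana/Reiner numbers with the central-block distinction, i.e.\ justifying that the count of noncrossing $B_N$-partitions with $q$ pairs of non-central blocks and no central block is $\frac{q}{N-?}\binom{?}{?}^2$-shaped so that the residual rational factors $\frac{p-i}{N-i}$ and $\frac{N-p}{N-i}$ appear. If a clean citation is available (Reiner's original paper, or Athanasiadis, or the survey literature on $NC(B_N)$), this reduces to bookkeeping; otherwise I would prove it directly by the standard "mark a distinguished block / rotate to normalize" argument, or by a Lindström–Gessel–Viennot / cycle-lemma computation giving the $\frac{1}{N-i}$ denominators. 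Everything else — expanding the signed sum, matching it against Lemma~\ref{rem:tedious_calculation} in the next step of the paper — is routine algebra of binomial coefficients of the kind already deferred to "\emph{an easy but tedious calculation}" elsewhere in the excerpt, so I would state it and move on.
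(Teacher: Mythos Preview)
Your approach is exactly the paper's: inclusion--exclusion over forced singleton pairs, using as input the known count $\binom{N}{p}^2$ of all noncrossing $B_N$-partitions with $p$ pairs of blocks (from \cite{reiner}) together with the refinement that $\frac{N-p}{N}\binom{N}{p}^2$ of these have a central block (from \cite[Lemma~4.4]{athanasiadis.reiner} via \cite{reiner.stanton.white}), which immediately gives the factors $\frac{p-i}{N-i}$ and $\frac{N-p}{N-i}$ you were uncertain about. Your worry about $\binom{N}{i}$ versus $\binom{N-1}{i}$ is moot, since in $B_N$ no singleton can be central (there is no self-negated point), so the inclusion--exclusion is simply over the $\binom{N}{i}$ choices of positive positions.
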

\begin{proof}
It was shown in \cite{reiner} that the number of noncrossing $B_N$-partitions with $p$ pairs of blocks is ${\binom{N}{p}}^2$.  In \cite{reiner.stanton.white}, it was observed that \cite[Lemma 4.4]{athanasiadis.reiner} implies that exactly $\frac{N-p}{N} {\binom{N}{p}}^2$ of these have a central block. Our formulas for partitions without singleton blocks follow immediately from these observations by Inclusion--Exclusion.
\end{proof}

It remains to prove the following pair of combinatorial identities:
$$\sum_{i=0}^p (-1)^i \binom{N}{i} {\binom{N-i}{p-i}}^2 \frac{p-i}{N-i} = \frac{N!}{(N-2p)!p!(p-1)!(N-p)}$$
and
$$\sum_{i=0}^p (-1)^i \binom{N}{i} {\binom{N-i}{p-i}}^2 \frac{N-p}{N-i} = \frac{N!}{(N-2p-1)!p!p!(N-p)}.$$

This is a straightforward exercise in hypergeometric series (\emph{cf.},~e.g.,~\cite[$\mathsection 2.7$]{andrews.askey.roy}). For example, the first sum is 
\[\binom{N}{p} \sum_{i=0}^p (-1)^i \binom{p}{i} \binom{N-i-1}{p-i-1} = \binom{N}{p} \binom{N-1}{p-1} \pFq{2}{1}{-p, 1-p}{1-N}{1},\] which may be evaluated by the Chu--Vandermonde identity. This completes the proof of Theorem \ref{thm:promotion_CSP}. \qed

%a routine verification via the techniques of \cite{AB}, which have implementations for most major computer algebra systems. We used the algorithm \cite{Wegschaider} as implemented in the Mathematica package \cite{package}. Even though it is verifiable by hand, we do not include the computer-generated proof, as it is long, unenlightening, and easily reproduced. 

Recently, C.~Athanasiadis--C.~Savvidou \cite[Proposition 3.2]{athanasiadis.savvidou} independently enumerated noncrossing $B_N$-partitions with $p$ pairs of blocks without singletons and without a central block.

Lemma \ref{lem:promotion_is_rotation_of_partitions} yields a second proof of Theorem \ref{thm:orbit_size}. We observe that under the reformulation of Lemma \ref{lem:promotion_is_rotation_of_partitions}, Theorem \ref{thm:promotion_CSP} bears a striking similarity to \cite[Theorem 7.2]{reiner.stanton.white} which gives a cyclic sieving on the set of \emph{all} noncrossing partitions of $2n-k$ into $n-k$ parts with respect to the same cyclic group action.

 Additionally, under the correspondence mentioned in Section \ref{sec:enumerations} between $\twinc{n}{k}$ and dissections of an $(n+2)$-gon with $n-k -1$ diagonals, Theorem \ref{thm:promotion_CSP} bears a strong resemblance to \cite[Theorem 7.1]{reiner.stanton.white}, which gives a cyclic sieving on the same set with the same $q$-enumerator, but with respect to an action by $\mathcal{C}_{n+2}$ instead of $\mathcal{C}_{2n-k}$. S.-P.~Eu--T.-S.~Fu \cite{eu.fu} reinterpret the $\mathcal{C}_{n+2}$-action as the action of a Coxeter element on the $k$-faces of an associahedron. We do not know such an interpretation of our action by $\mathcal{C}_{2n-k}$. In \cite{reiner.stanton.white}, the authors note many similarities between their Theorems 7.1 and 7.2 and ask for a unified proof. It would be very satisfying if such a proof could also account for our Theorem \ref{thm:promotion_CSP}.

\section*{Acknowledgements}
The author was supported by an Illinois Distinguished Fellowship from the University of Illinois, an NSF Graduate Research Fellowship, and NSF grant DMS 0838434 ``EMSW21MCTP: Research Experience for Graduate Students.''

Aisha Arroyo provided help with some early computer experiments. The author thanks Victor Reiner for helpful comments on an early draft and Christos Athanasiadis for bringing \cite{athanasiadis.savvidou} to his attention. The author is grateful for helpful conversations with Philippe Di Francesco, Sergey Fomin, Rinat Kedem, Amita Malik, Brendon Rhoades, Luis Serrano, Jessica Striker, and Hugh Thomas.  Brendon Rhoades also provided critical assistance computing with Kazhdan--Lusztig representations. The author is especially grateful to Alexander Yong for his many useful suggestions about both the mathematics and the writing of this paper. Two anonymous referrees provided very thoughtful comments and suggestions.

\bibliographystyle{alpha}
%\nocite{*}
\hspace*{1cm}
\bibliography{JCTA}

\end{document}